
\documentclass[12pt, reqno]{amsart}
\usepackage{eurosym}
\usepackage{amsmath, amsthm, amscd, amsfonts, amssymb, graphicx, color}
\usepackage[bookmarksnumbered, colorlinks, plainpages]{hyperref}

\setcounter{MaxMatrixCols}{10}

\hypersetup{colorlinks=true,linkcolor=red, anchorcolor=green, citecolor=cyan, urlcolor=red, filecolor=magenta, pdftoolbar=true}
\textheight 22.5truecm \textwidth 14.5truecm
\setlength{\oddsidemargin}{0.35in}\setlength{\evensidemargin}{0.35in}
\setlength{\topmargin}{-.5cm}
\newtheorem{theorem}{Theorem}[section]
\newtheorem{lemma}[theorem]{Lemma}
\newtheorem{proposition}[theorem]{Proposition}
\newtheorem{corollary}[theorem]{Corollary}
\theoremstyle{definition}
\newtheorem{definition}[theorem]{Definition}

\theoremstyle{remark}
\newtheorem{remark}[theorem]{Remark}
\numberwithin{equation}{section}

\begin{document}
\title[Positive $m$-homogeneous polynomial ideals]{Positive $m$-homogeneous
polynomial ideals}
\author{ Adel Bounabab and Khalil Saadi}
\date{}
\dedicatory{Laboratory of Functional Analysis and Geometry of Spaces,
Faculty of Mathematics and Computer Science, University of Mohamed
Boudiaf-M'sila, Po Box 166, Ichebilia, 28000, M'sila, Algeria.\\
adel.bounabab@univ-msila.dz\\
khalil.saadi@univ-msila.dz}

\begin{abstract}
We introduce and study the concept of positive polynomial ideals between
Banach lattices. The paper develops the basic principles of these classes
and presents methods for constructing positive polynomial ideals from given
positive operator ideals. In addition, we provide concrete examples of
positive polynomial ideals that illustrate the relevance and significance of
these classes.
\end{abstract}

\maketitle

\setcounter{page}{1}


\let\thefootnote\relax\footnote{\textit{2020 Mathematics Subject
Classification.} Primary 47B65, 46G25, 47L20, 47B10, 46B42.
\par
{}\textit{Key words and phrases. }Banach lattice, Positive $p$-summing
operators, Cohen positive strongly $p$-summing multilinear operators,
Positive left polynomial ideal, Positive right polynomial ideal, Positive
ideal, Positive $(q;r)$-dominated.polynomial mappings}

\section{\textsc{Introduction and preliminaries}}

Several attempts have been made to extend the theory of operator ideals to
nonlinear contexts. After the development of multi-ideal theory for
multilinear mappings and homogeneous polynomials, attention naturally turned
to the study of positive classes. In \cite{FerSaaP}, the positive ideal of
linear and multilinear mappings was investigated. These new ideals encompass
several important classes of operators, such as positive $p$-summing
operators \cite{Bla87}, positive strongly $p$-summing operators \cite{AB14},
and positive $(p,q)$-dominated operators \cite{CBD21} in the linear case, as
well as Cohen positive strongly $p$-summing multilinear operators \cite{BB18}%
, positive Cohen $p$-nuclear multilinear operators \cite{BBH21}, and
factorable positive strongly $p$-summing multilinear operators \cite{BBR23}
in the multilinear case. Motivated by these studies, we now propose to
define ideals for positive classes of homogeneous polynomials. Our aim is to
examine certain families of positive polynomial ideals and to demonstrate
how positivity enhances the theory of homogeneous polynomials. Following the
same procedure as in \cite{FerSaaP}, we introduce these polynomial ideals in
a systematic way. This approach not only unifies several notions already
studied in the literature but also opens new directions for extending
classical results to the nonlinear and positive framework. As in the linear
and multilinear cases, the definition of positive polynomial ideals
encompasses several classes, such as Cohen positive strongly $p$-summing
polynomials \cite{hamdi}, positive Cohen $p$-nuclear polynomials \cite%
{Hammou}, and positive $p$-dominated polynomials \cite{hamdi}, as well as
the new class we introduce here, namely positive $(q,r)$-dominated
polynomials.

The paper is structured as follows.

In Section 1, we review the fundamental concepts and terminology used in
this work, including Banach lattices, linear operators, symmetric
multilinear forms, and polynomials. We also recall the definition of
positive operator ideals, with particular attention to positive $p$-summing
operators, as well as the definition of polynomial ideals. In Section 2, we
establish the foundations of positive left ideals, denoted $\mathcal{P}%
_{L}^{+}$, positive right ideals, denoted $\mathcal{P}_{R}^{+}$, and
positive ideals, denoted $\mathcal{P}^{+},$ for $m$-homogeneous polynomials.
This framework naturally extends to the positive $m$-linear setting. We then
apply the composition method to construct a positive left polynomial ideal
from a given positive left operator ideal. In addition, we present the
factorization method to generate a positive right ideal of $m$-homogeneous
polynomials from a given positive right operator ideal$.$ In Section 3, we
present concrete examples of positive polynomial ideals, such as Cohen
positive strongly $p$-summing polynomials, positive Cohen $p$-nuclear
polynomials, and positive $p$-dominated polynomials. In particular, we
introduce the class of positive $(q,r)$-dominated polynomials. These classes
satisfy the Pietsch factorization theorem and constitute important examples
of positive polynomial ideals.

Throughout the paper, $E,F$ and $G$ denote Banach lattices and $X,Y$ denote
Banach spaces over $\mathbb{K}$ $\left( \mathbb{R}\text{ or }\mathbb{C}%
\right) $. By $B_{X}$ we denote the closed unit ball of $X$ and by $X^{\ast
} $ its topological dual. We use the symbol $\mathcal{L}(X;Y)$ for the space
of all bounded linear operators from $X$ into $Y$.\ For $1\leq p\leq \infty $%
, we denote by $p^{\ast }$ its conjugate, i.e., $1/p+1/p^{\ast }=1$. Let $E$
be a Banach lattice with norm $\left\Vert \cdot \right\Vert $ and order $%
\leq $. We denote by $E^{+}$ the positive cone of $E$, i.e., $E^{+}=\{x\in
E:x\geqslant 0\}.$ Let $x\in E,$ its positive part is defined by $%
x^{+}:=\sup \{x,0\}\geq 0\ $and its negative part is defined by $x^{-}:=\sup
\{-x,0\}\geq 0.$We have $x=x^{+}-x^{-}$ and $\left\vert x\right\vert
=x^{+}+x^{-}.$ The dual $E^{\ast }$ of a Banach lattice $E$ is a Banach
lattice with the natural order 
\begin{equation*}
x_{1}^{\ast }\leq x_{2}^{\ast }\Leftrightarrow \langle x,x_{1}^{\ast
}\rangle \leq \langle x,x_{2}^{\ast }\rangle ,\forall x\in E^{+}.
\end{equation*}%
A bounded linear operator $u:E\rightarrow F$ is called positive if $u(x)\in
F^{+}$, whenever $x\in E^{+}$. By $\mathcal{L}^{+}(E;F)$ we denote the set
of all positive operators from $E$ to $F$. A linear operator $u$ is called 
\textit{regular} if there exist $u_{1},u_{2}\in \mathcal{L}^{+}(E;F)$ such
that%
\begin{equation*}
u=u_{1}-u_{2}.
\end{equation*}%
We denote by $\mathcal{L}^{r}(E;F)$ the vector space of regular operators
from $E$ to $F.$ The vector space $\mathcal{L}^{r}(E;F)$ is generated by
positive operators which is a Banach space with the norm%
\begin{equation*}
\left\Vert u\right\Vert _{r}=\inf \left\{ \left\Vert v\right\Vert :v\in 
\mathcal{L}^{+}(E;F),\left\vert u\left( x\right) \right\vert \leq v\left(
x\right) ,x\in E^{+}\right\} ,
\end{equation*}%
By \cite[Section 1.3]{MN91}, if $F=\mathbb{K}$, we have 
\begin{equation*}
E^{\ast }=\mathcal{L}(E,\mathbb{K})=\mathcal{L}^{r}(E,\mathbb{K}).
\end{equation*}%
The canonical embedding $i:E\longrightarrow E^{\ast \ast }$ such that $%
\left\langle i(x),x^{\ast }\right\rangle =\left\langle x^{\ast
},x\right\rangle $ of $E$ into its second dual $E^{\ast \ast }$ is an order
isometry from $E$ onto a sublattice of $E^{\ast \ast }$. If we consider $E$
as a sublattice of $E^{\ast \ast }$ we have for $x_{1},x_{2}\in E$ 
\begin{equation*}
x_{1}\leq x_{2}\Longleftrightarrow \left\langle x_{1},x^{\ast }\right\rangle
\leq \left\langle x_{2},x^{\ast }\right\rangle ,\quad \forall x^{\ast }\in
E^{\ast +}.
\end{equation*}%
The spaces $\mathcal{C}(K)$ where $K$ compact and $L_{p}(\mu )$, $(1\leq
p\leq \infty )$ are Banach lattices. Let $X$ be a Banach space. We denote by 
$\ell _{p}^{n}(X)$ the Banach space of all absolutely $p$-summable sequences 
$(x_{i})_{i=1}^{n}\subset X$ with the norm 
\begin{equation*}
\Vert (x_{i})_{i=1}^{n}\Vert _{p}=(\sum_{i=1}^{n}\Vert x_{i}\Vert ^{p})^{%
\frac{1}{p}},
\end{equation*}%
and by $\ell _{p,w}^{n}(X)$ the Banach space of all weakly $p$-summable
sequences $(x_{i})_{i=1}^{n}\subset X$ with the norm,%
\begin{equation*}
\Vert (x_{i})_{i=1}^{n}\Vert _{p,w}=\sup_{x^{\ast }\in B_{X^{\ast
}}}(\sum_{i=1}^{n}|\langle x^{\ast },x_{i}\rangle |^{p})^{\frac{1}{p}}.
\end{equation*}%
Consider the case where $X$ is replaced by a Banach lattice $E$, and define 
\begin{equation*}
\ell _{p,|w|}^{n}(E)=\{(x_{i})_{i=1}^{n}\subset E:\left( |x_{i}|\right)
_{i=1}^{n}\in \ell _{p,w}^{n}(E)\}\text{ and }\Vert (x_{i})_{i=1}^{n}\Vert
_{p,|w|}=\Vert (|x_{i}|)_{i=1}^{n}\Vert _{p,w}.
\end{equation*}%
Let $B_{E^{\ast }}^{+}=\{x^{\ast }\in B_{E^{\ast }}:x^{\ast }\geq
0\}=B_{E^{\ast }}\cap E^{\ast +}$. If $(x_{i})_{i=1}^{n}\subset E^{+}$ , we
have that 
\begin{equation*}
\Vert (x_{i})_{i=1}^{n}\Vert _{p,|w|}=\Vert (x_{i})_{i=1}^{n}\Vert
_{p,w}=\sup_{x^{\ast }\in B_{E^{\ast }}^{+}}(\sum_{i=1}^{n}\langle x^{\ast
},x_{i}\rangle ^{p})^{\frac{1}{p}}.
\end{equation*}%
Given $m\in \mathbb{N},$ we denote by $\mathcal{L}(X_{1},...,X_{m};Y)$ the
Banach space of all bounded multilinear operators from $X_{1}\times
...\times X_{n}$ into $Y$ endowed with the supremum norm%
\begin{equation*}
\left\Vert T\right\Vert =\sup_{\substack{ \left\Vert x_{i}\right\Vert \leq 1 
\\ \left( 1\leq i\leq m\right) }}\left\Vert T\left( x_{1},...,x_{m}\right)
\right\Vert .
\end{equation*}%
A map $P:X\rightarrow Y$ is an $m$-homogeneous polynomial if there exists a
unique symmetric $m$-linear operator $\widehat{P}:X\times ...\times
X\rightarrow Y$ such that%
\begin{equation*}
P\left( x\right) =\widehat{P}\left( x,\overset{(m)}{...},x\right) ,
\end{equation*}%
for every $x\in X.$ Both are related by the polarization formula \cite[%
Theorem 1.10]{Muji} 
\begin{equation*}
\widehat{P}\left( x_{1},...,x_{m}\right) =\frac{1}{m!2^{m}}\sum\limits 
_{\substack{ \epsilon _{i}=\pm  \\ 1\leq i\leq m}}\epsilon _{1}...\epsilon
_{m}P(\sum\limits_{j=1}^{m}\epsilon _{j}x_{j}).
\end{equation*}%
We denote by $\mathcal{P}\left( ^{m}X;Y\right) $, the Banach space of all
continuous $m$-homogeneous polynomials from $X$ into $Y$ endowed with the
norm%
\begin{equation*}
\left\Vert P\right\Vert =\sup_{\left\Vert x\right\Vert \leq 1}\left\Vert
P\left( x\right) \right\Vert =\inf \left\{ C:\left\Vert P\left( x\right)
\right\Vert \leq C\left\Vert x\right\Vert ^{m},x\in X\right\} .
\end{equation*}%
We denote by $\mathcal{P}_{f}(^{m}X;Y)$ the space of all $m$-homogeneous
polynomials of finite type, that is%
\begin{equation*}
\mathcal{P}_{f}(^{m}X;Y)=\left\{ \sum\limits_{i=1}^{k}\varphi _{i}^{m}\left(
x\right) y_{i}:k\in \mathbb{N},\varphi _{i}\in X^{\ast },y_{i}\in Y,1\leq
i\leq k\right\} .
\end{equation*}%
If $Y=\mathbb{K}$, we write simply $\mathcal{P}\left( ^{m}X\right) $. For
the general theory of polynomials on Banach spaces, we refer to \cite{Dini}
and \cite{Muji}. By $X_{1}\widehat{\otimes }_{\pi }...\widehat{\otimes }%
_{\pi }X_{m}$\ we denote the completed projective tensor product of $%
X_{1},...,X_{m}.$ If $X=X_{1}=...=X_{m},$ we write $\widehat{\otimes }_{\pi
}^{m}X$. By $\otimes _{s}^{m}X:=X\otimes \overset{(m)}{...}\otimes X$ we
denote the m fold symmetric tensor product of $X$, that is, 
\begin{equation*}
\otimes _{s}^{m}X=\left\{ \sum\limits_{i=1}^{n}\lambda _{i}x_{i}\otimes 
\overset{(m)}{...}\otimes x_{i}:n\in \mathbb{N},\lambda _{i}\in \mathbb{K}%
,x_{i}\in X,\left( 1\leq i\leq n\right) \right\} .
\end{equation*}%
By $\widehat{\otimes }_{\pi ,s}^{m}X$ we denote the closure of $\otimes
_{s}^{m}X$ in $\widehat{\otimes }_{\pi }^{m}X.$ For symmetric tensor
products, we refer to \cite{Flo}. Let $P\in \mathcal{P}\left( ^{m}X;Y\right)
,$ we define its linearization $P_{L}:\widehat{\otimes }_{\pi
,s}^{m}X\rightarrow Y$ by 
\begin{equation*}
P_{L}\left( x\otimes \overset{(m)}{...}\otimes x\right) =P\left( x\right) ,
\end{equation*}%
for all $x\in X.$ Consider the canonical $m$-homogeneous polynomial $\delta
_{m}:X\rightarrow \widehat{\otimes }_{\pi ,s}^{m}X$ defined by 
\begin{equation*}
\delta _{m}\left( x\right) =x\otimes \overset{(m)}{...}\otimes x.
\end{equation*}%
We have the next diagram which is commute%
\begin{equation*}
\begin{array}{ccc}
X & \overset{P}{\rightarrow } & Y \\ 
& \delta _{m}\searrow & \uparrow P_{L} \\ 
&  & \widehat{\otimes }_{\pi ,s}^{m}X%
\end{array}%
\end{equation*}%
in the other words, $P=P_{L}\circ \delta _{m}.$ We have $\left\Vert
P\right\Vert =\left\Vert P_{L}\right\Vert $ and we have the following
isometric identification%
\begin{equation*}
\mathcal{P}(^{m}X;E)=\mathcal{L}(\widehat{\otimes }_{\pi ,s}^{m}X;E).
\end{equation*}

Blasco \cite{Bla87} introduced the positive generalization of $p$-summing
operators as follows: An operator $u:E\longrightarrow X$ is said to be 
\textit{positive }$p$\textit{-summing} ($1\leq p<\infty $) if there exists a
constant $C>0$ such that the inequality 
\begin{equation}
\left\Vert \left( u\left( x_{i}\right) \right) _{i=1}^{n}\right\Vert
_{p}\leq C\left\Vert \left( x_{i}\right) _{i=1}^{n}\right\Vert _{p,w},
\label{sec1def1}
\end{equation}%
holds for all $x_{1},\ldots ,x_{n}\in E^{+}.$ We denote by $\Pi
_{p}^{+}(E;X) $, the space of positive $p$-summing operators from $E$ into $%
X,$ which is a Banach space with the norm $\pi _{p}^{+}(T)$ given by the
infimum of the constant $C>0$ that verify the inequality $(\ref{sec1def1}).$
We have $\Pi _{\infty }^{+}(E;X)=\mathcal{L}(E;X)$. O.I. Zhukova \cite{Zhu98}%
, gives the Pietsch domination theorem concerning this class. The operator $%
u $ belongs to $\Pi _{p}^{+}(E;X)$ if and only if there exist a Radon
probability measure $\mu $ on the set $B_{E^{\ast }}^{+}$ and a positive
constant $C$ such that for every $x\in E^{+}$%
\begin{equation}
\left\Vert u\left( x\right) \right\Vert \leq C\left( \int_{B_{E^{\ast
}}^{+}}\langle x,x^{\ast }\rangle ^{p}d\mu (x^{\ast })\right) ^{\frac{1}{p}}.
\label{DomiSumming}
\end{equation}

\textbf{Positive operator ideal}: We provide the definition of the positive
ideal introduced and studied in \cite{FerSaaP}: A positive left ideal ,
denoted by $\mathcal{B}_{L}^{+},$ is a subclass of all continuous linear
operators from a Banach space into a Banach lattice such that for every
Banach space $X$ and Banach lattice $E,$ the components 
\begin{equation*}
\mathcal{B}_{L}^{+}\left( X;E\right) :=\mathcal{L}\left( X;E\right) \cap 
\mathcal{B}_{L}^{+}
\end{equation*}%
satisfy:\newline
$(i)$ $\mathcal{B}_{L}^{+}\left( X;E\right) $ is a linear subspace of $%
\mathcal{L}\left( X;E\right) $ containing the linear mappings of finite rank.%
\newline
$(ii)$ The positive ideal property: If $T\in \mathcal{B}_{L}^{+}\left(
X;E\right) ,u\in \mathcal{L}\left( Y;X\right) $ and $v\in \mathcal{L}%
^{+}\left( E;F\right) $, then $v\circ T\circ u$ is in $\mathcal{B}%
_{L}^{+}\left( Y;F\right) $.\newline
If $\Vert \cdot \Vert _{\mathcal{B}_{L}^{+}}:\mathcal{B}_{L}^{+}\rightarrow 
\mathbb{R}^{+}$ satisfies:\newline
a) $\left( \mathcal{B}_{L}^{+}\left( X;E\right) ,\Vert \cdot \Vert _{%
\mathcal{B}_{L}^{+}}\right) $ is a Banach space for all Banach space $X$ and
Banach lattice $E$.\newline
b) The form $T:\mathbb{K}\rightarrow \mathbb{K}$ given by $T\left( \lambda
\right) =\lambda $ satisfies $\left\Vert u\right\Vert _{\mathcal{B}%
_{L}^{+}}=1$,\newline
c) $T\in \mathcal{B}_{L}^{+}\left( X;E\right) ,u\in \mathcal{L}\left(
Y;X\right) $ and $v\in \mathcal{L}^{+}\left( E;F\right) $ then 
\begin{equation*}
\left\Vert v\circ T\circ u\right\Vert _{\mathcal{B}_{L}^{+}}\leq \Vert
v\Vert \Vert T\Vert _{\mathcal{B}_{L}^{+}}\left\Vert u\right\Vert .
\end{equation*}%
The class $\left( \mathcal{B}_{L}^{+},\Vert \cdot \Vert _{\mathcal{B}%
_{L}^{+}}\right) $ is a positive Banach ideal.

The \textit{positive right ideal}, denoted $\mathcal{B}_{R}^{+}$, is
obtained by reversing the roles of the operators $u$ and $v$. In this case,
we consider compositions with positive linear operators on the right and
arbitrary linear operators on the left. Similarly, the \textit{positive ideal%
}, denoted $\mathcal{B}^{+}$, is obtained by restricting to positive linear
operators on both sides of the composition. Note that every positive right
or left ideal is automatically a positive ideal.

\textbf{Polynomial ideal}: An ideal of $m$-homogeneous polynomials (or
polynomial ideal) $\mathcal{Q}$ is a subclass of the class of all continuous
homogeneous polynomials between Banach spaces such that for all $m\in 
\mathbb{N}$, and Banach spaces $E$ and $F,$ the components $\mathcal{Q}%
\left( ^{m}E;F\right) =\mathcal{P}\left( ^{m}E;F\right) \cap \mathcal{Q}$
satisfy:\newline
$(i)$ $\mathcal{Q}\left( ^{m}E;F\right) $ is a linear subspace of $\mathcal{P%
}\left( ^{m}E;F\right) $ which contains the $m$-homogeneous polynomials of
finite type.\newline
$(ii)$ The positive ideal property: If $u\in \mathcal{L}\left( G;E\right)
,P\in \mathcal{P}\left( ^{m}E;F\right) $ and $v\in \mathcal{L}\left(
F;G\right) $, then $v\circ P\circ u$ is in $\mathcal{Q}\left( ^{m}G;H\right) 
$.\newline
If $\Vert \cdot \Vert _{\mathcal{Q}}:\mathcal{Q}\rightarrow \mathbb{R}^{+}$
satisfies:\newline
a) $(\mathcal{Q}\left( ^{m}E;F\right) ,\Vert \cdot \Vert _{\mathcal{Q}})$ is
a normed (Banach) space for all Banach spaces $E,F$ and $m$.\newline
b) The polynomial form $P^{m}:\mathbb{K}\rightarrow \mathbb{K}$ given by $%
P\left( \lambda \right) =\lambda ^{m}$ satisfies $\left\Vert
P^{m}\right\Vert _{\mathcal{Q}}=1$,\newline
c) If $u\in \mathcal{L}\left( G;E\right) ,P\in \mathcal{P}\left(
^{m}E;F\right) $ and $v\in \mathcal{L}\left( F;G\right) ,$ then 
\begin{equation*}
\left\Vert v\circ P\circ u\right\Vert _{\mathcal{Q}}\leq \Vert v\Vert \Vert
P\Vert _{\mathcal{Q}}\left\Vert u\right\Vert ^{m}.
\end{equation*}%
The class $(\mathcal{Q},\Vert \cdot \Vert _{\mathcal{Q}})$ is called a
normed (Banach) polynomial ideal. The case $m=1$ recovers the classical
theory of normed and Banach operator ideals. For further details on linear
operator ideals, we refer to \cite{defl}.

\section{\textsc{Positive polynomial ideals}}

In this section, we introduce positive polynomial ideals and develop
abstract methods for their construction in the setting of $m$-homogeneous
polynomials. The positive ideal property is examined through the behavior of
associated linear operators, establishing a natural connection between the
linear and polynomial frameworks. These classes extend and complement the
theory of positive multilinear ideals studied in detail in \cite{FerSaaP}.

\begin{definition}
A positive left polynomial ideal (or positive left ideal of $m$-homogeneous
polynomials), denoted by $\mathcal{P}_{L}^{+},$ is a subclass of all
continuous $m$-homogeneous polynomials from a Banach space into a Banach
lattice such that for all Banach space $X$ and Banach lattice $E,$ the
components 
\begin{equation*}
\mathcal{P}_{L}^{+}\left( ^{m}X;E\right) :=\mathcal{P}\left( ^{m}X;E\right)
\cap \mathcal{P}_{L}^{+},
\end{equation*}%
satisfy:\newline
$(i)$ $\mathcal{P}_{L}^{+}\left( ^{m}X;E\right) $ is a linear subspace of $%
\mathcal{P}\left( ^{m}X;E\right) $ containing the polynomials of finite rank.%
\newline
$(ii)$ The positive ideal property: If $P\in \mathcal{P}_{L}^{+}\left(
^{m}X;E\right) ,u\in \mathcal{L}\left( Y;X\right) $ and $v\in \mathcal{L}%
^{+}\left( E;F\right) $, then $v\circ P\circ u$ is in $\mathcal{P}%
_{L}^{+}\left( ^{m}Y;F\right) $.\newline
If $\Vert \cdot \Vert _{\mathcal{P}_{L}^{+}}:\mathcal{P}_{L}^{+}\rightarrow 
\mathbb{R}^{+}$ satisfies:\newline
a) $\left( \mathcal{P}_{L}^{+}\left( ^{m}X;E\right) ,\Vert \cdot \Vert _{%
\mathcal{P}_{L}^{+}}\right) $ is a Banach (quasi-Banach) space for all
Banach space $X$ and Banach lattice $E$,\newline
b) The polynomial form $P^{m}:\mathbb{K}\rightarrow \mathbb{K}$ given by $%
P^{m}\left( \lambda \right) =\lambda ^{m}$ satisfies $\left\Vert
P^{m}\right\Vert _{\mathcal{P}_{L}^{+}}=1$,\newline
c) If $P\in \mathcal{P}_{L}^{+}\left( ^{m}X;E\right) ,u\in \mathcal{L}\left(
Y;X\right) $ and $v\in \mathcal{L}^{+}\left( E;F\right) ,$ then 
\begin{equation*}
\left\Vert v\circ T\circ u\right\Vert _{\mathcal{P}_{L}^{+}}\leq \Vert
v\Vert \Vert P\Vert _{\mathcal{P}_{L}^{+}}\left\Vert u\right\Vert ^{m}.
\end{equation*}%
The class $\left( \mathcal{P}_{L}^{+},\Vert \cdot \Vert _{\mathcal{P}%
_{L}^{+}}\right) $ is a positive left Banach (quasi-Banach) polynomial ideal.%
\newline
\end{definition}

\begin{remark}
\label{Remark1}In condition $(ii)$, because every regular operator is a
difference of positive ones, the set $\mathcal{L}^{+}\left( E;F\right) $ can
be replaced by the space $\mathcal{L}^{r}\left( E;F\right) $, and condition $%
(ii)$ remains the same.
\end{remark}

Analogous to the previous approach, we introduce the \textit{positive right
polynomial ideal}, denoted $\mathcal{P}_{R}^{+}$, by swapping the roles of
the operators $u$ and $v$. In doing so, we examine the composition of
positive linear operators on the right-hand side and linear operators on the
left-hand side. Similarly, we define the \textit{positive polynomial ideal},
denoted $\mathcal{P}^{+}$, by considering only the positive linear
operators, with composition occurring on both the right and left sides.

\begin{remark}
1) It is evident that every polynomial ideal is indeed positive polynomial
ideal.

2) Every positive right or left polynomial ideal is positive polynomial
ideal.
\end{remark}

\begin{proposition}
\label{Propo}Let $\mathcal{B}_{R}^{+}$ be a positive right operator ideal
and $\mathcal{P}_{L}^{+}$ a positive left polynomial ideal. The composition
ideal $\mathcal{P}_{L}^{+}\circ \mathcal{B}_{R}^{+}$ is defined as the set
of polynomials $P$ that admit a factorization $P=Q\circ u,$ with $u\in 
\mathcal{B}_{R}^{+}\left( E;X\right) $ and $Q\in \mathcal{P}_{L}^{+}\left(
^{m}X;F\right) $. This construction yields a positive polynomial ideal.
\end{proposition}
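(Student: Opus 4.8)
The plan is to verify, in turn, the defining requirements of a positive polynomial ideal for the class $\mathcal{P}_{L}^{+}\circ \mathcal{B}_{R}^{+}$, equipped with the natural quasi-norm
\[
\Vert P\Vert _{\mathcal{P}_{L}^{+}\circ \mathcal{B}_{R}^{+}}=\inf \left\{ \Vert Q\Vert _{\mathcal{P}_{L}^{+}}\,\Vert u\Vert _{\mathcal{B}_{R}^{+}}^{m}:P=Q\circ u\right\} ,
\]
the infimum running over all factorizations $P=Q\circ u$ with $u\in \mathcal{B}_{R}^{+}(E;X)$ and $Q\in \mathcal{P}_{L}^{+}(^{m}X;F)$. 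First I would check that any such $P$ is a bounded $m$-homogeneous polynomial from the Banach lattice $E$ into the Banach lattice $F$: homogeneity is immediate from the linearity of $u$ and the $m$-homogeneity of $Q$, while $\Vert P(x)\Vert \leq \Vert Q\Vert \,\Vert u\Vert ^{m}\Vert x\Vert ^{m}$ gives boundedness as well as the estimate $\Vert P\Vert \leq \Vert P\Vert _{\mathcal{P}_{L}^{+}\circ \mathcal{B}_{R}^{+}}$, which I will use repeatedly.

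For property $(i)$ the delicate point is closure under addition. Given $P_{1}=Q_{1}\circ u_{1}$ and $P_{2}=Q_{2}\circ u_{2}$ with $u_{i}\in \mathcal{B}_{R}^{+}(E;X_{i})$ and $Q_{i}\in \mathcal{P}_{L}^{+}(^{m}X_{i};F)$, I would pass to the direct sum $X=X_{1}\oplus _{\infty }X_{2}$, put $u=\iota _{1}u_{1}+\iota _{2}u_{2}$ (that is, $u(x)=(u_{1}(x),u_{2}(x))$) and $Q=Q_{1}\circ \pi _{1}+Q_{2}\circ \pi _{2}$, so that $Q\circ u=P_{1}+P_{2}$. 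Because the inclusions $\iota _{i}$ are arbitrary bounded operators acting on the left, the ideal property of $\mathcal{B}_{R}^{+}$ together with its linearity yields $u\in \mathcal{B}_{R}^{+}(E;X)$; because the projections $\pi _{i}$ are arbitrary bounded operators acting on the right (with the identity on the left), the ideal property of $\mathcal{P}_{L}^{+}$ together with its linearity yields $Q\in \mathcal{P}_{L}^{+}(^{m}X;F)$. For the finite-type polynomials it suffices to treat a rank-one polynomial $\varphi ^{m}(\cdot )\,y$ with $\varphi \in E^{\ast }$ and $y\in F$: it factors as $Q\circ \varphi $ through $\mathbb{K}$, where $\varphi $ is a finite-rank element of $\mathcal{B}_{R}^{+}(E;\mathbb{K})$ and $Q(\lambda )=\lambda ^{m}y$ belongs to $\mathcal{P}_{f}(^{m}\mathbb{K};F)\subseteq \mathcal{P}_{L}^{+}(^{m}\mathbb{K};F)$; additivity then covers all finite-type polynomials.

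For the positive ideal property $(ii)$ the key is associativity of composition. If $P=Q\circ u$ and $a\in \mathcal{L}^{+}(G;E)$, $b\in \mathcal{L}^{+}(F;H)$ are positive, then $b\circ P\circ a=(b\circ Q)\circ (u\circ a)$. Here $u\circ a\in \mathcal{B}_{R}^{+}(G;X)$ since $a$ is positive and enters on the right of the right operator ideal $\mathcal{B}_{R}^{+}$, while $b\circ Q\in \mathcal{P}_{L}^{+}(^{m}X;H)$ since $b$ is positive and enters on the left of the left polynomial ideal $\mathcal{P}_{L}^{+}$; this explains why the construction must pair a right operator ideal with a left polynomial ideal, and why the outcome is a genuinely two-sided positive ideal and not a one-sided one. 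The accompanying ideal inequalities $\Vert b\circ Q\Vert _{\mathcal{P}_{L}^{+}}\leq \Vert b\Vert \,\Vert Q\Vert _{\mathcal{P}_{L}^{+}}$ and $\Vert u\circ a\Vert _{\mathcal{B}_{R}^{+}}\leq \Vert u\Vert _{\mathcal{B}_{R}^{+}}\Vert a\Vert $, together with the infimum over factorizations, give condition $(c)$, that is $\Vert b\circ P\circ a\Vert \leq \Vert b\Vert \,\Vert a\Vert ^{m}\Vert P\Vert $; condition $(b)$ follows by factoring $\lambda \mapsto \lambda ^{m}$ through $\mathbb{K}$ and invoking the normalizations of $\mathcal{B}_{R}^{+}$ and $\mathcal{P}_{L}^{+}$ on the identity of $\mathbb{K}$.

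It remains to address condition $(a)$. I would first show that $\Vert \cdot \Vert _{\mathcal{P}_{L}^{+}\circ \mathcal{B}_{R}^{+}}$ is a quasi-norm: the direct-sum construction above yields $\Vert P_{1}+P_{2}\Vert \leq (\Vert Q_{1}\Vert _{\mathcal{P}_{L}^{+}}+\Vert Q_{2}\Vert _{\mathcal{P}_{L}^{+}})(\Vert u_{1}\Vert _{\mathcal{B}_{R}^{+}}+\Vert u_{2}\Vert _{\mathcal{B}_{R}^{+}})^{m}$, and optimizing over the rescalings $u_{i}\mapsto \lambda _{i}u_{i}$, $Q_{i}\mapsto \lambda _{i}^{-m}Q_{i}$ (which do not change $P_{i}$) sharpens this to $\Vert P_{1}+P_{2}\Vert ^{1/(m+1)}\leq \Vert P_{1}\Vert ^{1/(m+1)}+\Vert P_{2}\Vert ^{1/(m+1)}$, hence a quasi-triangle inequality with constant $2^{m}$. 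The hard part will be completeness. The plan is to reduce to proving that every series $\sum_{k}P_{k}$ with $\sum_{k}\Vert P_{k}\Vert ^{1/(m+1)}<\infty $ converges in the class; selecting factorizations $P_{k}=Q_{k}\circ u_{k}$ normalized so that $\Vert u_{k}\Vert _{\mathcal{B}_{R}^{+}}=1$ and $\Vert Q_{k}\Vert _{\mathcal{P}_{L}^{+}}$ is summable, I would assemble $u\in \mathcal{B}_{R}^{+}(E;X)$ and $Q\in \mathcal{P}_{L}^{+}(^{m}X;F)$ through an infinite direct sum $X=(\bigoplus_{k}X_{k})_{c_{0}}$ with $Q\circ u=\sum_{k}P_{k}$, using the completeness of $\mathcal{B}_{R}^{+}$ and of $\mathcal{P}_{L}^{+}$ to place $u$ and $Q$ in their respective ideals. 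Controlling the convergence of this infinite direct sum simultaneously in both factors is the main technical obstacle, and it is exactly where the completeness of the two given ideals is used.
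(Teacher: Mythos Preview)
Your proof of the ideal properties $(i)$ and $(ii)$ is correct and follows essentially the same approach as the paper: the direct-sum construction $u=(u_1,u_2)$, $Q=Q_1\circ\pi_1+Q_2\circ\pi_2$ for closure under addition, the factorization of a rank-one polynomial $\varphi^m(\cdot)y$ through $\mathbb{K}$ for the finite-type containment, and the regrouping $b\circ P\circ a=(b\circ Q)\circ(u\circ a)$ for the positive ideal property are exactly the paper's arguments.

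One organizational point worth flagging: the statement of this proposition only asserts that $\mathcal{P}_L^+\circ\mathcal{B}_R^+$ is a \emph{positive polynomial ideal}, i.e., that $(i)$ and $(ii)$ hold. The paper deliberately separates the quasi-norm properties $(a)$, $(b)$, $(c)$ into subsequent results (the analogue of your conditions $(b)$ and $(c)$ is its Proposition~\ref{Propo1}, and the quasi-norm and completeness form its Theorem~\ref{The1}). So everything you write from ``It remains to address condition $(a)$'' onward is extra to the present proposition. Your quasi-triangle argument via rescaling is sound and in fact slightly sharper than the paper's (the paper just records the constant $2$ from normalizing $\Vert u_i\Vert_{\mathcal{B}_R^+}=1$, without optimizing to the $p$-norm inequality with $p=1/(m+1)$). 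Your completeness sketch via an infinite $c_0$-direct sum is plausible but, as you yourself note, the simultaneous control of $u$ and $Q$ in that construction is genuinely delicate; the paper sidesteps this and does not give a detailed completeness proof either, simply stating that ``the proof of the following theorem can be easily proved'' and then only writing out the quasi-norm axioms. So you have not introduced an error, but be aware that this part lies outside the scope of the proposition as stated.
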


\begin{proof}
Let $E$ and $F$ be Banach lattices. We will verify that $\mathcal{P}%
_{L}^{+}\circ \mathcal{B}_{R}^{+}\left( ^{m}E;F\right) $ is a linear
subspace. Let $\lambda \in \mathbb{K}$ and $P\in \mathcal{P}_{L}^{+}\circ 
\mathcal{B}_{R}^{+}(^{m}E;F).$ There exist a Banach space $X$ and elements $%
u_{0}\in \mathcal{B}_{R}^{+}\left( E;X\right) ,Q_{0}\in \mathcal{P}%
_{L}^{+}\left( ^{m}X;F\right) $ such that $P=Q_{0}\circ u_{0}.$ Then, $%
\lambda P=\left( \lambda Q_{0}\right) \circ u_{0}\in \mathcal{P}%
_{L}^{+}\circ \mathcal{B}_{R}^{+}(^{m}E;F).$ Now, let $P_{1},P_{2}\in 
\mathcal{P}_{L}^{+}\circ \mathcal{B}_{R}^{+}(^{m}E;F)$ such that there exist
Banach spaces $X,Y$ and elements $u_{1}\in \mathcal{B}_{R}^{+}\left(
E;X\right) ,u_{2}\in \mathcal{B}_{R}^{+}\left( E;Y\right) ,Q_{1}\in \mathcal{%
P}_{L}^{+}\left( ^{m}X;F\right) ,$ and $Q_{2}\in \mathcal{P}_{L}^{+}\left(
^{m}Y;F\right) $ with the following commutative diagrams: 
\begin{equation*}
\begin{array}{ccc}
E & \overset{P_{1}}{\longrightarrow } & F \\ 
u_{1}\downarrow & \nearrow Q_{1} &  \\ 
X &  & 
\end{array}%
\text{ and }%
\begin{array}{ccc}
E & \overset{P_{2}}{\longrightarrow } & F \\ 
u_{2}\downarrow & \nearrow Q_{2} &  \\ 
Y &  & 
\end{array}%
\end{equation*}%
We define $A=i_{1}\circ u_{1}+i_{2}\circ u_{2},$ where $i_{1}:X%
\longrightarrow X\times Y$ and $i_{2}:Y\longrightarrow X\times Y$ are given
by $i_{1}\left( x\right) =\left( x,0\right) $ and $i_{2}\left( y\right)
=\left( 0,y\right) .$ We have%
\begin{equation*}
A\in \mathcal{B}_{R}^{+}\left( E;X\times Y\right) ,
\end{equation*}%
since $u_{1}\in \mathcal{B}_{R}^{+}\left( E;X\right) $ and $u_{2}\in 
\mathcal{B}_{R}^{+}\left( E;Y\right) $ we have $i_{j}\circ u_{j}\in \mathcal{%
B}_{R}^{+}\left( E;X\times Y\right) \left( j=1,2\right) $. Consequently, 
\begin{equation*}
A=i_{1}\circ u_{1}+i_{2}\circ u_{2}\in \mathcal{B}_{R}^{+}\left( E;X\times
Y\right) .
\end{equation*}%
On the other hand, we define $B=Q_{1}\circ \pi _{1}+Q_{2}\circ \pi _{2},$
where $\pi _{1}:X\times Y\longrightarrow X$ and $\pi _{2}:X\times
Y\longrightarrow Y$ are given by $\pi _{1}\left( x,y\right) =x$ and $\pi
_{2}\left( x,y\right) =y.$ We have 
\begin{equation*}
B\in \mathcal{P}_{L}^{+}\left( ^{m}X\times Y;F\right) ,
\end{equation*}%
where $\widehat{B}:\left( X\times Y\right) \times ...\times \left( X\times
Y\right) ,$ the multilinear symmetric associated to $B,$ is defined by%
\begin{equation*}
\widehat{B}=\widehat{Q_{1}}\left( \pi _{1},\overset{\left( m\right) }{...}%
,\pi _{1}\right) +\widehat{Q_{2}}\left( \pi _{2},\overset{\left( m\right) }{%
...},\pi _{2}\right) .
\end{equation*}%
Indeed, let $\left( x,y\right) \in X\times Y,$ we have%
\begin{eqnarray*}
\widehat{B}\left( \left( x,y\right) ,\overset{\left( m\right) }{...},\left(
x,y\right) \right) &=&\widehat{Q_{1}}\left( \pi _{1}\left( x,y\right) ,%
\overset{\left( m\right) }{...},\pi _{1}\left( x,y\right) \right) +\widehat{%
Q_{2}}\left( \pi _{2}\left( x,y\right) ,\overset{\left( m\right) }{...},\pi
_{2}\left( x,y\right) \right) \\
&=&Q_{1}\circ \pi _{1}\left( x,y\right) +Q_{2}\circ \pi _{2}\left(
x,y\right) =B\left( x,y\right) .
\end{eqnarray*}%
Similarly, since $Q_{1}\in \mathcal{P}_{L}^{+}\left( X;F\right) $ and $%
Q_{2}\in \mathcal{P}_{L}^{+}\left( Y;F\right) ,$ we have $Q_{j}\circ \pi
_{j}\in \mathcal{P}_{L}^{+}\left( ^{m}X\times Y;F\right) \left( j=1,2\right) 
$. Consequently, 
\begin{equation*}
B=Q_{1}\circ \pi _{1}+Q_{2}\circ \pi _{2}\in \mathcal{P}_{L}^{+}\left(
^{m}X\times Y;F\right) .
\end{equation*}%
A simple calculation shows that 
\begin{equation*}
P_{1}+P_{2}=B\circ A.
\end{equation*}%
Let $P_{f}\in \mathcal{P}(^{m}E;F)$ be a finite-rank operator. It can be
expressed as a combination of operators of the form $\varphi ^{m}b$ where $%
\varphi \in E^{\ast }$ and $b\in F.$ Let $u=\varphi ^{m}b.$ Define $B:%
\mathbb{K}\longrightarrow F$ by $B\left( \lambda \right) =\lambda
^{m}b=\left( id_{\mathbb{K}}\left( \lambda \right) \right) ^{m}b.$ Clearly, $%
B\in \mathcal{P}_{L}^{+}\left( ^{m}\mathbb{K};F\right) $ and define $%
A:E\longrightarrow \mathbb{K}$ by $A\left( x\right) =\varphi \left( x\right) 
$ which belongs to $\mathcal{B}_{R}^{+}\left( E;\mathbb{K}\right) .$ Then,
we have 
\begin{equation*}
u\left( x\right) =B\circ A\left( x\right) \in \mathcal{P}_{L}^{+}\circ 
\mathcal{B}_{R}^{+}\left( ^{m}E;F\right) .
\end{equation*}%
By the vector space structure of $\mathcal{P}_{L}^{+}\circ \mathcal{B}%
_{R}^{+}\left( ^{m}E;F\right) $ it follows that $P_{f}\in \mathcal{P}%
_{L}^{+}\circ \mathcal{B}_{R}^{+}\left( ^{m}E;F\right) .$ Finally, we verify
the positive ideal property. Let $P=Q_{0}\circ u_{0}\in \mathcal{P}%
_{L}^{+}\circ \mathcal{B}_{R}^{+}\left( ^{m}E;F\right) ,u\in \mathcal{L}%
^{+}\left( G;E\right) $ and $v\in \mathcal{L}^{+}\left( F;H\right) $. Then 
\begin{equation*}
v\circ P\circ u=\left( v\circ Q_{0}\right) \circ \left( u_{0}\circ u\right) .
\end{equation*}%
Since $v\circ Q_{0}\in \mathcal{P}_{L}^{+}\left( ^{m}X;H\right) $ and $%
u_{0}\circ u\in \mathcal{B}_{R}^{+}\left( G;X\right) ,$ we obtain $v\circ
P\circ u\in \mathcal{P}_{L}^{+}\circ \mathcal{B}_{R}^{+}\left(
^{m}E;F\right) .$
\end{proof}

Let $\mathcal{B}_{R}^{+}$ be a positive right Banach ideal and $\mathcal{P}%
_{L}^{+}$ a positive left Banach polynomial ideal. If $E$ and $F$ are Banach
lattices and $P\in \mathcal{P}_{L}^{+}\circ \mathcal{B}_{R}^{+}(^{m}E;F),$
we define%
\begin{equation}
\left\Vert P\right\Vert _{\mathcal{P}_{L}^{+}\circ \mathcal{B}_{R}^{+}}=\inf
\left\{ \left\Vert Q\right\Vert _{\mathcal{P}_{L}^{+}}\left\Vert
u\right\Vert _{\mathcal{B}_{R}^{+}}^{m}:P=Q\circ u\right\} .  \label{2.1}
\end{equation}

\begin{proposition}
\label{Propo1}1) For every $P\in \mathcal{P}_{L}^{+}\circ \mathcal{B}%
_{R}^{+}(^{m}E;F),u\in \mathcal{L}^{+}\left( G;E\right) $ and $v\in \mathcal{%
L}^{+}\left( F;H\right) ,$ we have%
\begin{equation*}
\left\Vert v\circ P\circ u\right\Vert _{\mathcal{P}_{L}^{+}\circ \mathcal{B}%
_{R}^{+}}\leq \Vert v\Vert \left\Vert P\right\Vert _{\mathcal{P}%
_{L}^{+}\circ \mathcal{B}_{R}^{+}}\left\Vert u\right\Vert ^{m}.
\end{equation*}%
2) Let $P^{m}:\mathbb{K}\rightarrow \mathbb{K}$ given by $P^{m}\left(
\lambda \right) =\lambda ^{m}.$ We have $\left\Vert P^{m}\right\Vert _{%
\mathcal{P}_{L}^{+}\circ \mathcal{B}_{R}^{+}}=1.$\newline
3) For every $P\in \mathcal{P}_{L}^{+}\circ \mathcal{B}_{R}^{+}\left(
^{m}E;F\right) ,$ we have 
\begin{equation}
\left\Vert P\right\Vert \leq \left\Vert P\right\Vert _{\mathcal{P}%
_{L}^{+}\circ \mathcal{B}_{R}^{+}}.  \label{2.2}
\end{equation}
\end{proposition}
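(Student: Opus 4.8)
The plan is to treat the three assertions separately, all resting on the factorization \(P=Q\circ u\) that defines membership in \(\mathcal{P}_{L}^{+}\circ \mathcal{B}_{R}^{+}\), together with the ideal inequalities of the two component ideals.

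For part 1), I would fix an arbitrary factorization \(P=Q_{0}\circ u_{0}\) with \(u_{0}\in \mathcal{B}_{R}^{+}(E;X)\) and \(Q_{0}\in \mathcal{P}_{L}^{+}(^{m}X;F)\), and rewrite \(v\circ P\circ u=(v\circ Q_{0})\circ (u_{0}\circ u)\). Since \(v\in \mathcal{L}^{+}(F;H)\) acts on the left, the left-ideal property gives \(v\circ Q_{0}\in \mathcal{P}_{L}^{+}(^{m}X;H)\) with \(\|v\circ Q_{0}\|_{\mathcal{P}_{L}^{+}}\le \|v\|\,\|Q_{0}\|_{\mathcal{P}_{L}^{+}}\), while \(u\in \mathcal{L}^{+}(G;E)\) acts on the right, so the right-ideal property gives \(u_{0}\circ u\in \mathcal{B}_{R}^{+}(G;X)\) with \(\|u_{0}\circ u\|_{\mathcal{B}_{R}^{+}}\le \|u_{0}\|_{\mathcal{B}_{R}^{+}}\|u\|\). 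Feeding this admissible factorization of \(v\circ P\circ u\) into the definition (\ref{2.1}) and then taking the infimum over all factorizations of \(P\) yields the claimed estimate; the only point to watch is the exponent \(m\) on \(\|u_{0}\circ u\|_{\mathcal{B}_{R}^{+}}\), which distributes as \(\|u_{0}\|_{\mathcal{B}_{R}^{+}}^{m}\|u\|^{m}\).

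For the upper bound in part 2), I would exhibit the single factorization \(P^{m}=P^{m}\circ \mathrm{id}_{\mathbb{K}}\) through \(X=\mathbb{K}\): here \(\mathrm{id}_{\mathbb{K}}\in \mathcal{B}_{R}^{+}(\mathbb{K};\mathbb{K})\) has ideal norm \(1\) by the normalization of \(\mathcal{B}_{R}^{+}\), and \(P^{m}\in \mathcal{P}_{L}^{+}(^{m}\mathbb{K};\mathbb{K})\) has \(\|P^{m}\|_{\mathcal{P}_{L}^{+}}=1\) by axiom b). Hence \(\|P^{m}\|_{\mathcal{P}_{L}^{+}\circ \mathcal{B}_{R}^{+}}\le 1\). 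The reverse inequality I would deduce from part 3) applied to \(P^{m}\), using that the ordinary norm \(\|P^{m}\|=\sup_{|\lambda|\le 1}|\lambda|^{m}=1\).

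The substance is in part 3), which also closes part 2). Starting from any factorization \(P=Q\circ u\), submultiplicativity of the ordinary polynomial norm gives \(\|P\|=\|Q\circ u\|\le \|Q\|\,\|u\|^{m}\). It then suffices to compare the ordinary norms with the ideal norms, i.e. to establish the domination inequalities \(\|Q\|\le \|Q\|_{\mathcal{P}_{L}^{+}}\) and \(\|u\|\le \|u\|_{\mathcal{B}_{R}^{+}}\); taking the infimum over factorizations then gives (\ref{2.2}). \textbf{The main obstacle is precisely these two domination inequalities.} Each is approached by probing a single rank-one factorization through \(\mathbb{K}\): for \(u\) one composes \(\psi \circ u\circ (\lambda \mapsto \lambda e)\) with \(e\in B_{E}^{+}\) and \(\psi \in B_{X^{\ast }}\) and invokes axioms b) and c) of \(\mathcal{B}_{R}^{+}\); for \(Q\) one composes \(\psi \circ Q\circ (\lambda \mapsto \lambda a)\) with \(\|a\|\le 1\) and a positive \(\psi \in B_{F^{\ast }}^{+}\) and invokes axioms b) and c) of \(\mathcal{P}_{L}^{+}\), noting that \(\psi(Q(\lambda a))=\lambda^{m}\psi(Q(a))\) is \(\psi(Q(a))\) times the form \(P^{m}\). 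The delicate point, forced by positivity, is that the right-ideal axioms only allow testing \(u\) on positive elements of the domain and the left-ideal axioms only allow testing \(Q\) against positive functionals on the codomain; recovering the full ordinary norm from these restricted suprema is where the Banach-lattice structure (the identity \(\|x\|=\||x|\|\) and the duality \(\|y\|=\sup_{\psi \in B_{F^{\ast }}^{+}}\langle |y|,\psi \rangle\)) must be used, and this is the step I would expect to require the most care.
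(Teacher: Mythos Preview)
Your part 1) and the upper bound in 2) agree with the paper. The divergence is in the logical order of 2) and 3): you attempt 3) first via the factor-wise domination inequalities $\|Q\|\le\|Q\|_{\mathcal{P}_L^+}$ and $\|u\|\le\|u\|_{\mathcal{B}_R^+}$, and then deduce the lower bound in 2) from 3). The paper reverses this: it proves the lower bound in 2) directly, and only then deduces 3) from 1) and 2).

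Your route hits a real obstacle that you flag but do not resolve. The lattice identities you invoke, $\|x\|=\||x|\|$ and $\|y\|=\sup_{\psi\in B_{F^{*}}^{+}}\psi(|y|)$, do not close the gap: positive testing gives you $\|u(e)\|\le\|u\|_{\mathcal{B}_R^+}$ only for $e\in E^{+}$ and $|\psi(Q(a))|\le\|Q\|_{\mathcal{P}_L^+}$ only for $\psi\in F^{*+}$, whereas recovering $\|u\|$ and $\|Q\|$ would require either $\|u(e)\|$ for arbitrary $e$ or $\psi(|Q(a)|)$ for positive $\psi$ --- and neither follows from what you have. Concretely, in $\ell^{2}$ with $b=(1,-1,0,\dots)$ one has $\sup_{\psi\ge 0,\ \|\psi\|\le 1}|\psi(b)|=1<\sqrt{2}=\|b\|$, so positive functionals alone do not detect the full norm; the same phenomenon blocks the passage from $\|u(e)\|$ for $e\ge 0$ to $\|u\|$.

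The paper sidesteps this entirely. For the lower bound in 2) it takes an arbitrary factorization $P^{m}=Q_{0}\circ u_{0}$ with $u_{0}:\mathbb{K}\to X$ and $Q_{0}:X\to\mathbb{K}$; because $\mathbb{K}$ sits on the ``lattice'' side in each case, the required positive test operator is just $id_{\mathbb{K}}$, and the domination $\|Q_{0}\|\le\|Q_{0}\|_{\mathcal{P}_L^+}$ and $\|u_{0}\|\le\|u_{0}\|_{\mathcal{B}_R^+}$ follows directly from axioms b) and c) with no lattice subtlety. Part 3) is then obtained by applying part 1) to $\varphi\circ P\circ(\lambda\mapsto\lambda x)$ and invoking the already-established $\|P^{m}\|_{\mathcal{P}_L^{+}\circ\mathcal{B}_R^{+}}=1$, rather than by bounding the factors $Q$ and $u$ separately.
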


\begin{proof}
1) Let $P\in \mathcal{P}_{L}^{+}\circ \mathcal{B}_{R}^{+}(^{m}E;F),u\in 
\mathcal{L}^{+}\left( G;E\right) $ and $v\in \mathcal{L}^{+}\left(
F;H\right) .$ Take a representation $P=Q_{0}\circ u_{0}.$ Then 
\begin{eqnarray*}
\left\Vert v\circ P\circ u\right\Vert _{\mathcal{P}_{L}^{+}\circ \mathcal{B}%
_{R}^{+}} &=&\left\Vert \left( v\circ Q_{0}\right) \circ \left( u_{0}\circ
u\right) \right\Vert _{\mathcal{P}_{L}^{+}\circ \mathcal{B}_{R}^{+}} \\
&\leq &\Vert v\circ Q_{0}\Vert _{\mathcal{P}_{L}^{+}}\left\Vert u_{0}\circ
u\right\Vert _{\mathcal{B}_{R}^{+}}^{m} \\
&\leq &\Vert v\Vert \Vert Q_{0}\Vert _{\mathcal{P}_{L}^{+}}\left\Vert
u_{0}\right\Vert _{\mathcal{B}_{R}^{+}}^{m}\left\Vert u\right\Vert ^{m}.
\end{eqnarray*}%
Taking the infimum over all representations of $P$, we obtain%
\begin{equation*}
\left\Vert v\circ P\circ u\right\Vert _{\mathcal{P}_{L}^{+}\circ \mathcal{B}%
_{R}^{+}}\leq \Vert v\Vert \left\Vert P\right\Vert _{\mathcal{P}%
_{L}^{+}\circ \mathcal{B}_{R}^{+}}\left\Vert u\right\Vert ^{m}.
\end{equation*}%
2) Note that $P^{m}=\left( id_{\mathbb{K}}\right) ^{m}\circ id_{\mathbb{K}}.$
Hence%
\begin{equation*}
\left\Vert P^{m}\right\Vert _{\mathcal{P}_{L}^{+}\circ \mathcal{B}%
_{R}^{+}}=\left\Vert \left( id_{\mathbb{K}}\right) ^{m}\circ id_{\mathbb{K}%
}\right\Vert _{\mathcal{P}_{L}^{+}\circ \mathcal{B}_{R}^{+}}\leq \left\Vert
\left( id_{\mathbb{K}}\right) ^{m}\right\Vert _{\mathcal{P}%
_{L}^{+}}\left\Vert id_{\mathbb{K}}\right\Vert _{\mathcal{B}_{R}^{+}}=1.
\end{equation*}%
On the other hand, let $Q_{0}\circ u_{0}$ be a factorization of $P^{m}$ with 
$u_{0}:\mathbb{K}\rightarrow X$ and $Q_{0}:X\rightarrow \mathbb{K}.$ Then,
there exists $x_{0}\in X$ such that 
\begin{equation*}
u_{0}\left( \lambda \right) =\lambda x_{0}\text{ and }Q_{0}\left(
x_{0}\right) =1.
\end{equation*}%
Moreover, 
\begin{equation*}
\left\Vert u_{0}\right\Vert =\left\Vert x_{0}\right\Vert \text{ and }%
\left\Vert Q_{0}\right\Vert \geq \left\Vert Q_{0}\left( \frac{x_{0}}{%
\left\Vert x_{0}\right\Vert }\right) \right\Vert =\frac{1}{\left\Vert
x_{0}\right\Vert ^{m}}
\end{equation*}%
Hence,%
\begin{equation*}
\left\Vert Q_{0}\right\Vert _{\mathcal{P}_{L}^{+}}\left\Vert
u_{0}\right\Vert _{\mathcal{B}_{R}^{+}}^{m}\geq \left\Vert Q_{0}\right\Vert
\left\Vert u_{0}\right\Vert ^{m}\geq 1
\end{equation*}%
Taking the infimum over all possible factorizations of $P^{m}$, we conclude
that%
\begin{equation*}
\left\Vert P^{m}\right\Vert _{\mathcal{P}_{L}^{+}\circ \mathcal{B}%
_{R}^{+}}\geq 1.
\end{equation*}%
3) Let $\varphi \in F^{\ast }$ and $x\in E.$ Consider $u_{0}:\mathbb{K}%
\longrightarrow E$ defined by $u_{0}\left( \lambda \right) =\lambda x.$ We
have $\left\Vert u_{0}\right\Vert =\left\Vert x\right\Vert $ and%
\begin{equation*}
\varphi \circ P\circ u_{0}\left( \lambda \right) =\lambda ^{m}\left\langle
\varphi ,P\left( x\right) \right\rangle .
\end{equation*}%
Thus $\varphi \circ P\circ u_{0}=\left\langle \varphi ,P\left( x\right)
\right\rangle P^{m}.$ We have 
\begin{eqnarray*}
\left\vert \left\langle \varphi ,P\left( x\right) \right\rangle \right\vert
&=&\left\vert \left\langle \varphi ,P\left( x\right) \right\rangle
\right\vert \left\Vert P^{m}\right\Vert _{\mathcal{P}_{L}^{+}\circ \mathcal{B%
}_{R}^{+}}=\left\Vert \left\langle \varphi ,P\left( x\right) \right\rangle
P^{m}\right\Vert _{\mathcal{P}_{L}^{+}\circ \mathcal{B}_{R}^{+}} \\
&=&\left\Vert \varphi \circ P\circ u_{0}\right\Vert _{\mathcal{P}%
_{L}^{+}\circ \mathcal{B}_{R}^{+}} \\
&\leq &\left\Vert \varphi \right\Vert \left\Vert P\right\Vert _{\mathcal{P}%
_{L}^{+}\circ \mathcal{B}_{R}^{+}}\left\Vert u_{0}\right\Vert
^{m}=\left\Vert \varphi \right\Vert \left\Vert P\right\Vert _{\mathcal{P}%
_{L}^{+}\circ \mathcal{B}_{R}^{+}}\left\Vert x\right\Vert ^{m}.
\end{eqnarray*}%
Then%
\begin{eqnarray*}
\left\Vert P\left( x\right) \right\Vert &=&\sup_{\varphi \in B_{F^{\ast
}}}\left\vert \left\langle \varphi ,P\left( x\right) \right\rangle
\right\vert \leq \sup_{\varphi \in B_{F^{\ast }}}\left\Vert \varphi
\right\Vert \left\Vert P\right\Vert _{\mathcal{B}_{L}^{+}\circ \mathcal{B}%
_{R}^{+}}\left\Vert x\right\Vert ^{m} \\
&\leq &\left\Vert P\right\Vert _{\mathcal{B}_{L}^{+}\circ \mathcal{B}%
_{R}^{+}}\left\Vert x\right\Vert ^{m}.
\end{eqnarray*}%
Consequently, $\left\Vert P\right\Vert \leq \left\Vert P\right\Vert _{%
\mathcal{B}_{L}^{+}\circ \mathcal{B}_{R}^{+}}.$
\end{proof}

\begin{lemma}
The quantity $\left\Vert \cdot \right\Vert _{\mathcal{P}_{L}^{+}\circ 
\mathcal{B}_{R}^{+}}$ defined in $(\ref{2.1})$ can equivalently be expressed
as 
\begin{equation*}
\left\Vert P\right\Vert _{\mathcal{P}_{L}^{+}\circ \mathcal{B}_{R}^{+}}=\inf
\left\{ \left\Vert Q\right\Vert _{\mathcal{P}_{L}^{+}}:P=Q\circ u\text{ and }%
\left\Vert u\right\Vert _{\mathcal{B}_{R}^{+}}=1\right\} .
\end{equation*}
\end{lemma}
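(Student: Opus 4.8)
Denote by $b$ the infimum on the right-hand side and by $a:=\left\Vert P\right\Vert _{\mathcal{P}_{L}^{+}\circ \mathcal{B}_{R}^{+}}$ the quantity defined in $(\ref{2.1})$. The plan is a rescaling (normalization) argument: I convert an arbitrary factorization $P=Q\circ u$ into one with $\left\Vert u\right\Vert _{\mathcal{B}_{R}^{+}}=1$, compensating for the rescaling of $u$ through the $m$-homogeneity of $Q$, and then compare the two infima.

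The inequality $a\leq b$ is immediate. Every factorization $P=Q\circ u$ with $\left\Vert u\right\Vert _{\mathcal{B}_{R}^{+}}=1$ is in particular an admissible factorization for $(\ref{2.1})$, and for such a factorization the product appearing in $(\ref{2.1})$ collapses to $\left\Vert Q\right\Vert _{\mathcal{P}_{L}^{+}}\left\Vert u\right\Vert _{\mathcal{B}_{R}^{+}}^{m}=\left\Vert Q\right\Vert _{\mathcal{P}_{L}^{+}}$. Taking the infimum over all normalized factorizations then yields $a\leq b$.

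For the reverse inequality I would fix an arbitrary factorization $P=Q\circ u$ with $u\in \mathcal{B}_{R}^{+}(E;X)$ and $Q\in \mathcal{P}_{L}^{+}(^{m}X;F)$, and set $\lambda :=\left\Vert u\right\Vert _{\mathcal{B}_{R}^{+}}$. Assuming $\lambda >0$, define $\widetilde{u}:=\lambda ^{-1}u$ and $\widetilde{Q}:=\lambda ^{m}Q$. Since $\mathcal{B}_{R}^{+}(E;X)$ and $\mathcal{P}_{L}^{+}(^{m}X;F)$ are linear subspaces, $\widetilde{u}$ and $\widetilde{Q}$ remain in the respective components. Using that $Q$ is $m$-homogeneous, one checks $\widetilde{Q}(\widetilde{u}(x))=\lambda ^{m}Q(\lambda ^{-1}u(x))=\lambda ^{m}\lambda ^{-m}Q(u(x))=P(x)$, so $P=\widetilde{Q}\circ \widetilde{u}$ is again a factorization, now with $\left\Vert \widetilde{u}\right\Vert _{\mathcal{B}_{R}^{+}}=1$. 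Consequently $b\leq \left\Vert \widetilde{Q}\right\Vert _{\mathcal{P}_{L}^{+}}=\lambda ^{m}\left\Vert Q\right\Vert _{\mathcal{P}_{L}^{+}}=\left\Vert Q\right\Vert _{\mathcal{P}_{L}^{+}}\left\Vert u\right\Vert _{\mathcal{B}_{R}^{+}}^{m}$, and taking the infimum over all factorizations of $P$ gives $b\leq a$.

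It remains to handle the degenerate case $\lambda =0$. Because the ideal norm $\left\Vert \cdot \right\Vert _{\mathcal{B}_{R}^{+}}$ dominates the operator norm (a standard consequence of the ideal axioms), $\left\Vert u\right\Vert _{\mathcal{B}_{R}^{+}}=0$ forces $u=0$ and hence $P=Q\circ 0=0$; conversely, if $P=0$ the normalized factorization with $Q=0$ and any norm-one $u\in \mathcal{B}_{R}^{+}$ gives $b=0=a$. Thus whenever $P\neq 0$ every factorization has $\lambda >0$ and the argument above applies. The only point requiring genuine care is the homogeneity bookkeeping — that rescaling $u$ by $\lambda ^{-1}$ must be absorbed by the factor $\lambda ^{m}$ (not $\lambda$) on $Q$ — together with noting that normalized factorizations exist, which is guaranteed since the finite-rank polynomials already lie in $\mathcal{P}_{L}^{+}\circ \mathcal{B}_{R}^{+}$.
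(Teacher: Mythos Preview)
Your proof is correct and follows essentially the same rescaling argument as the paper: given a factorization $P=Q_{0}\circ u_{0}$, rewrite it as $P=\bigl(\left\Vert u_{0}\right\Vert _{\mathcal{B}_{R}^{+}}^{m}Q_{0}\bigr)\circ \bigl(u_{0}/\left\Vert u_{0}\right\Vert _{\mathcal{B}_{R}^{+}}\bigr)$ and compare the two infima. Your treatment is in fact slightly more careful than the paper's, since you explicitly verify the $m$-homogeneity bookkeeping and address the degenerate case $\left\Vert u\right\Vert _{\mathcal{B}_{R}^{+}}=0$, which the paper passes over in silence.
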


\begin{proof}
First, it is clear that 
\begin{equation*}
\left\Vert P\right\Vert _{\mathcal{P}_{L}^{+}\circ \mathcal{B}_{R}^{+}}\leq
\inf \left\{ \left\Vert Q\right\Vert _{\mathcal{P}_{L}^{+}}:P=Q\circ u\text{
and }\left\Vert u\right\Vert _{\mathcal{B}_{R}^{+}}=1\right\} .
\end{equation*}%
Consider a representation of $P$ as $Q_{0}\circ u_{0}.$ We can rewrite it as 
$P=(\left\Vert u_{0}\right\Vert _{\mathcal{B}_{R}^{+}}^{m}Q_{0})\circ (\frac{%
u_{0}}{\left\Vert u_{0}\right\Vert _{\mathcal{B}_{R}^{+}}}).$ Hence%
\begin{equation*}
\left\Vert \left\Vert u_{0}\right\Vert _{\mathcal{B}_{R}^{+}}^{m}Q_{0}\right%
\Vert _{\mathcal{P}_{L}^{+}}\geq \inf \left\{ \left\Vert Q\right\Vert _{%
\mathcal{P}_{L}^{+}}:P=Q\circ u\text{ and }\left\Vert u\right\Vert _{%
\mathcal{B}_{R}^{+}}=1\right\} .
\end{equation*}%
This implies%
\begin{equation*}
\left\Vert u_{0}\right\Vert _{\mathcal{B}_{R}^{+}}^{m}\left\Vert
Q_{0}\right\Vert _{\mathcal{P}_{L}^{+}}\geq \inf \left\{ \left\Vert
Q\right\Vert _{\mathcal{P}_{L}^{+}}:P=Q\circ u\text{ and }\left\Vert
u\right\Vert _{\mathcal{B}_{R}^{+}}=1\right\}
\end{equation*}%
Taking the infimum over all factorizations $P,$ we get%
\begin{equation*}
\left\Vert P\right\Vert _{\mathcal{P}_{L}^{+}\circ \mathcal{B}_{R}^{+}}\geq
\inf \left\{ \left\Vert Q\right\Vert _{\mathcal{P}_{L}^{+}}:P=Q\circ u\text{
and }\left\Vert u\right\Vert _{\mathcal{B}_{R}^{+}}=1\right\} .
\end{equation*}%
\bigskip
\end{proof}

The proof of the following theorem can be easily proved.

\begin{theorem}
\label{The1}If $\mathcal{B}_{R}^{+}$ is a positive right Banach ideal and $%
\mathcal{P}_{L}^{+}$ is a positive left Banach polynomial ideal, then%
\begin{equation*}
\left( \mathcal{P}_{L}^{+}\circ \mathcal{B}_{R}^{+},\left\Vert \cdot
\right\Vert _{\mathcal{P}_{L}^{+}\circ \mathcal{B}_{R}^{+}}\right) ,
\end{equation*}%
forms a positive quasi-Banach polynomial ideal.
\end{theorem}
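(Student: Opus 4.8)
The plan is to observe first that the hard structural work is already done: Proposition \ref{Propo} establishes that $\mathcal{P}_{L}^{+}\circ \mathcal{B}_{R}^{+}$ is a positive polynomial ideal in the algebraic sense, while Proposition \ref{Propo1} supplies the normalization $\left\Vert P^{m}\right\Vert _{\mathcal{P}_{L}^{+}\circ \mathcal{B}_{R}^{+}}=1$ (axiom (b)) and the ideal inequality $\left\Vert v\circ P\circ u\right\Vert \leq \Vert v\Vert \,\Vert P\Vert \,\Vert u\Vert ^{m}$ (axiom (c)). Thus the only genuinely new task is to confirm that $\left\Vert \cdot \right\Vert _{\mathcal{P}_{L}^{+}\circ \mathcal{B}_{R}^{+}}$ is a \emph{complete quasi-norm} on each component $\mathcal{P}_{L}^{+}\circ \mathcal{B}_{R}^{+}(^{m}E;F)$. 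Homogeneity $\left\Vert \lambda P\right\Vert =\left\vert \lambda \right\vert \left\Vert P\right\Vert $ is immediate from the factorization $\lambda P=(\lambda Q_{0})\circ u_{0}$, and positive-definiteness follows from the domination estimate $\left\Vert P\right\Vert \leq \left\Vert P\right\Vert _{\mathcal{P}_{L}^{+}\circ \mathcal{B}_{R}^{+}}$ of Proposition \ref{Propo1}(3): a vanishing quasi-norm forces $\left\Vert P\right\Vert =0$, hence $P=0$.

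For the quasi-triangle inequality I would reuse the construction from the proof of Proposition \ref{Propo}. Given $P_{1}=Q_{1}\circ u_{1}$ and $P_{2}=Q_{2}\circ u_{2}$, normalize via the preceding Lemma so that $\left\Vert u_{1}\right\Vert _{\mathcal{B}_{R}^{+}}=\left\Vert u_{2}\right\Vert _{\mathcal{B}_{R}^{+}}=1$ and $\left\Vert Q_{j}\right\Vert _{\mathcal{P}_{L}^{+}}\leq \left\Vert P_{j}\right\Vert _{\mathcal{P}_{L}^{+}\circ \mathcal{B}_{R}^{+}}+\varepsilon$. Equip $X\times Y$ with the sup-norm $\left\Vert (x,y)\right\Vert =\max (\left\Vert x\right\Vert ,\left\Vert y\right\Vert )$, so that the inclusions $i_{1},i_{2}$ and the projections $\pi _{1},\pi _{2}$ all have norm at most $1$. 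Writing $P_{1}+P_{2}=B\circ A$ as in Proposition \ref{Propo} with $A=i_{1}\circ u_{1}+i_{2}\circ u_{2}$ and $B=Q_{1}\circ \pi _{1}+Q_{2}\circ \pi _{2}$, the ideal inequalities (plus the genuine triangle inequalities of the Banach ideals $\mathcal{B}_{R}^{+}$ and $\mathcal{P}_{L}^{+}$) give $\left\Vert A\right\Vert _{\mathcal{B}_{R}^{+}}\leq 2$ and $\left\Vert B\right\Vert _{\mathcal{P}_{L}^{+}}\leq \left\Vert Q_{1}\right\Vert _{\mathcal{P}_{L}^{+}}+\left\Vert Q_{2}\right\Vert _{\mathcal{P}_{L}^{+}}$, whence
\[
\left\Vert P_{1}+P_{2}\right\Vert _{\mathcal{P}_{L}^{+}\circ \mathcal{B}_{R}^{+}}\leq \left\Vert B\right\Vert _{\mathcal{P}_{L}^{+}}\left\Vert A\right\Vert _{\mathcal{B}_{R}^{+}}^{m}\leq 2^{m}\left( \left\Vert P_{1}\right\Vert _{\mathcal{P}_{L}^{+}\circ \mathcal{B}_{R}^{+}}+\left\Vert P_{2}\right\Vert _{\mathcal{P}_{L}^{+}\circ \mathcal{B}_{R}^{+}}\right)
\]
after letting $\varepsilon \to 0$. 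This is the quasi-triangle inequality with constant $2^{m}$, which is precisely why the statement asserts a quasi-Banach, rather than Banach, ideal.

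The main obstacle is completeness, and I would establish it through the standard fast-Cauchy-series criterion for quasi-normed spaces: it suffices to show that every series $\sum_{n}P_{n}$ with $\sum_{n}t_{n}^{1/(m+1)}<\infty$, where $t_{n}:=\left\Vert P_{n}\right\Vert _{\mathcal{P}_{L}^{+}\circ \mathcal{B}_{R}^{+}}$, converges in $\mathcal{P}_{L}^{+}\circ \mathcal{B}_{R}^{+}(^{m}E;F)$ (discarding the terms with $t_{n}=0$, which vanish). Choose near-optimal normalized factorizations $P_{n}=Q_{n}\circ u_{n}$ with $\left\Vert u_{n}\right\Vert _{\mathcal{B}_{R}^{+}}=1$ and $\left\Vert Q_{n}\right\Vert _{\mathcal{P}_{L}^{+}}\leq 2t_{n}$, form the $\ell _{\infty }$-sum $Z=\big(\bigoplus_{n}X_{n}\big)_{\infty }$ with its canonical isometric inclusions $j_{n}$ and norm-one projections $p_{n}$, and set $A=\sum_{n}\alpha _{n}(j_{n}\circ u_{n})$ and $B=\sum_{n}\beta _{n}(Q_{n}\circ p_{n})$ with $\alpha _{n}=t_{n}^{1/(m+1)}$ and $\beta _{n}=t_{n}^{-m/(m+1)}$, so that $\beta _{n}\alpha _{n}^{m}=1$. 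The balancing makes both $\sum_{n}\left\vert \alpha _{n}\right\vert =\sum_{n}t_{n}^{1/(m+1)}$ and $\sum_{n}\left\vert \beta _{n}\right\vert \left\Vert Q_{n}\right\Vert _{\mathcal{P}_{L}^{+}}\leq 2\sum_{n}t_{n}^{1/(m+1)}$ finite, so the two series converge absolutely in the Banach ideals $\mathcal{B}_{R}^{+}(E;Z)$ and $\mathcal{P}_{L}^{+}(^{m}Z;F)$, yielding $A\in \mathcal{B}_{R}^{+}$ and $B\in \mathcal{P}_{L}^{+}$. A coordinatewise computation gives $B\circ A=\sum_{n}\beta _{n}\alpha _{n}^{m}P_{n}=\sum_{n}P_{n}$, exhibiting the sum as an element of $\mathcal{P}_{L}^{+}\circ \mathcal{B}_{R}^{+}$; applying the identical tail construction through $\big(\bigoplus_{n>N}X_{n}\big)_{\infty }$ bounds $\left\Vert \sum_{n>N}P_{n}\right\Vert _{\mathcal{P}_{L}^{+}\circ \mathcal{B}_{R}^{+}}$ by a tail of $\sum_{n}t_{n}^{1/(m+1)}$, so the partial sums converge to the limit in the composed quasi-norm itself (not merely in the polynomial norm furnished by Proposition \ref{Propo1}(3)). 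The delicate point throughout is exactly this last step, namely verifying that the infinite sums $A$ and $B$ genuinely define elements of the respective Banach ideals, which is where the completeness hypotheses on $\mathcal{B}_{R}^{+}$ and $\mathcal{P}_{L}^{+}$ are essential, and that $B\circ A$ reconstructs the series exactly.
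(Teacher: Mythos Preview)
Your treatment of the quasi-norm axioms mirrors the paper's almost exactly: both invoke Propositions \ref{Propo} and \ref{Propo1} for the ideal structure and axioms (b)--(c), both derive homogeneity from rescaling a factorization and positive-definiteness from the domination $\left\Vert P\right\Vert \leq \left\Vert P\right\Vert _{\mathcal{P}_{L}^{+}\circ \mathcal{B}_{R}^{+}}$, and both establish the quasi-triangle inequality via the $P_{1}+P_{2}=B\circ A$ construction with normalized $u_{j}$. Your constant $2^{m}$ is in fact the correct one, since the definition of the composed quasi-norm involves $\left\Vert A\right\Vert _{\mathcal{B}_{R}^{+}}^{m}$; the paper's displayed bound $\left\Vert B\right\Vert _{\mathcal{P}_{L}^{+}}\left\Vert A\right\Vert _{\mathcal{B}_{R}^{+}}$ and the resulting constant $2$ appear to omit the exponent.

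The substantive difference is that you supply a completeness argument while the paper does not: its proof stops after verifying the quasi-norm axioms and never addresses why each component is complete, even though the statement asserts a quasi-Banach ideal. Your fast-Cauchy-series construction through the $\ell_{\infty}$-sum $Z=\big(\bigoplus_{n}X_{n}\big)_{\infty}$, with the balanced weights $\alpha_{n}=t_{n}^{1/(m+1)}$ and $\beta_{n}=t_{n}^{-m/(m+1)}$ chosen so that both $A$ and $B$ land in their respective Banach ideals and $\beta_{n}\alpha_{n}^{m}=1$, is correct; the exponent $1/(m+1)$ matches the Aoki--Rolewicz parameter associated with the quasi-triangle constant $2^{m}$, so the series criterion you invoke is the right one. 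In short, your proposal is sound and in fact fills a gap left open in the paper's own argument.
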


\begin{proof}
We show that $\left\Vert .\right\Vert _{\mathcal{P}_{L}^{+}\circ \mathcal{B}%
_{R}^{+}}$ defines a quasi-norm, the remaining properties follow from
Proposition \ref{Propo1}. Let $\lambda \in \mathbb{K}$ and $P\in \mathcal{P}%
_{L}^{+}\circ \mathcal{B}_{R}^{+}(^{m}E;F).$ There exist a Banach space $X$
and elements $u_{0}\in \mathcal{B}_{R}^{+}\left( E;X\right) ,Q_{0}\in 
\mathcal{P}_{L}^{+}\left( ^{m}X;F\right) $ such that $P=Q_{0}\circ u_{0}.$
Then%
\begin{equation*}
\left\Vert \lambda P\right\Vert _{\mathcal{P}_{L}^{+}\circ \mathcal{B}%
_{R}^{+}}\leq \left\Vert \lambda Q_{0}\right\Vert _{\mathcal{P}%
_{L}^{+}}\left\Vert u_{0}\right\Vert _{\mathcal{B}_{R}^{+}}^{m}=\left\vert
\lambda \right\vert \left\Vert Q_{0}\right\Vert _{\mathcal{P}%
_{L}^{+}}\left\Vert u_{0}\right\Vert _{\mathcal{B}_{R}^{+}}^{m}.
\end{equation*}%
Taking the infimum over all factorizations of $P,$ we get 
\begin{equation*}
\left\Vert \lambda P\right\Vert _{\mathcal{P}_{L}^{+}\circ \mathcal{B}%
_{R}^{+}}\leq \left\vert \lambda \right\vert \left\Vert P\right\Vert _{%
\mathcal{P}_{L}^{+}\circ \mathcal{B}_{R}^{+}}.
\end{equation*}%
For the reverse inequality, assume $\lambda \neq 0.$ If $Q_{0}\circ u_{0}$
is a representation of $\lambda P,$ then $P=\frac{Q_{0}}{\lambda }\circ
u_{0},$ giving%
\begin{equation*}
\left\Vert P\right\Vert _{\mathcal{P}_{L}^{+}\circ \mathcal{B}_{R}^{+}}\leq
\left\Vert \frac{Q_{0}}{\lambda }\right\Vert _{\mathcal{P}%
_{L}^{+}}\left\Vert u_{0}\right\Vert _{\mathcal{B}_{R}^{+}}^{m}\leq \frac{1}{%
\left\vert \lambda \right\vert }\left\Vert Q_{0}\right\Vert _{\mathcal{P}%
_{L}^{+}}\left\Vert u_{0}\right\Vert _{\mathcal{B}_{R}^{+}}^{m}.
\end{equation*}%
Taking the infimum over all factorizations of $\lambda P,$ we obtain 
\begin{equation*}
\left\vert \lambda \right\vert \left\Vert P\right\Vert _{\mathcal{P}%
_{L}^{+}\circ \mathcal{B}_{R}^{+}}\leq \left\Vert \lambda P\right\Vert _{%
\mathcal{P}_{L}^{+}\circ \mathcal{B}_{R}^{+}}.
\end{equation*}%
By $(\ref{2.2})$ if $\left\Vert P\right\Vert _{\mathcal{P}_{L}^{+}\circ 
\mathcal{B}_{R}^{+}}=0,$ then $P=0.$ Let $P_{1},P_{2}\in \mathcal{P}%
_{L}^{+}\circ \mathcal{B}_{R}^{+}(^{m}E;F).$ Following a similar approach to
the proof of Proposition \ref{Propo}$,$ $P_{1}+P_{2}=B\circ A.$ We can then
establish the following inequalities%
\begin{eqnarray*}
\left\Vert A\right\Vert _{\mathcal{B}_{R}^{+}} &\leq &\left\Vert i_{1}\circ
u_{1}\right\Vert _{\mathcal{B}_{R}^{+}}+\left\Vert i_{2}\circ
u_{2}\right\Vert _{\mathcal{B}_{R}^{+}} \\
&\leq &\left\Vert i_{1}\right\Vert \left\Vert u_{1}\right\Vert _{\mathcal{B}%
_{R}^{+}}+\left\Vert i_{2}\right\Vert \left\Vert u_{2}\right\Vert _{\mathcal{%
B}_{R}^{+}}=\left\Vert u_{1}\right\Vert _{\mathcal{B}_{R}^{+}}+\left\Vert
u_{2}\right\Vert _{\mathcal{B}_{R}^{+}}.
\end{eqnarray*}%
Similarly,%
\begin{eqnarray*}
\left\Vert B\right\Vert _{\mathcal{P}_{L}^{+}} &\leq &\left\Vert Q_{1}\circ
\pi _{1}\right\Vert _{\mathcal{P}_{L}^{+}}+\left\Vert Q_{2}\circ \pi
_{2}\right\Vert _{\mathcal{P}_{L}^{+}} \\
&\leq &\left\Vert Q_{1}\right\Vert _{\mathcal{P}_{L}^{+}}\left\Vert \pi
_{1}\right\Vert ^{m}+\left\Vert Q_{2}\right\Vert _{\mathcal{P}%
_{L}^{+}}\left\Vert \pi _{2}\right\Vert ^{m}=\left\Vert Q_{1}\right\Vert _{%
\mathcal{P}_{L}^{+}}+\left\Vert Q_{2}\right\Vert _{\mathcal{P}_{L}^{+}}.
\end{eqnarray*}%
Now, for each $\varepsilon >0$ we can choose $u_{1},u_{2},Q_{1},Q_{2}$ such
that 
\begin{equation*}
\left\Vert Q_{j}\right\Vert _{\mathcal{P}_{L}^{+}}\leq \left\Vert
P_{j}\right\Vert _{\mathcal{P}_{L}^{+}\circ \mathcal{B}_{R}^{+}}+\varepsilon 
\text{ and }\left\Vert u_{j}\right\Vert _{\mathcal{B}_{R}^{+}}=1\text{ for }%
j=1,2.
\end{equation*}%
A simple calculation shows that%
\begin{eqnarray*}
\left\Vert P_{1}+P_{2}\right\Vert _{\mathcal{P}_{L}^{+}\circ \mathcal{B}%
_{R}^{+}} &\leq &\left\Vert B\right\Vert _{\mathcal{P}_{L}^{+}}\left\Vert
A\right\Vert _{\mathcal{B}_{R}^{+}} \\
&\leq &\left( \left\Vert u_{1}\right\Vert _{\mathcal{B}_{R}^{+}}+\left\Vert
u_{2}\right\Vert _{\mathcal{B}_{R}^{+}}\right) \left( \left\Vert
Q_{1}\right\Vert _{\mathcal{P}_{L}^{+}}+\left\Vert Q_{2}\right\Vert _{%
\mathcal{P}_{L}^{+}}\right) \\
&\leq &2\left( \left\Vert P_{1}\right\Vert _{\mathcal{P}_{L}^{+}\circ 
\mathcal{B}_{R}^{+}}+\left\Vert P_{2}\right\Vert _{\mathcal{P}_{L}^{+}\circ 
\mathcal{B}_{R}^{+}}+2\varepsilon \right)
\end{eqnarray*}%
Since $\varepsilon $ is arbitrary, it follows that%
\begin{equation*}
\left\Vert P_{1}+P_{2}\right\Vert _{\mathcal{P}_{L}^{+}\circ \mathcal{B}%
_{R}^{+}}\leq 2\left( \left\Vert P_{1}\right\Vert _{\mathcal{P}_{L}^{+}\circ 
\mathcal{B}_{R}^{+}}+\left\Vert P_{2}\right\Vert _{\mathcal{P}_{L}^{+}\circ 
\mathcal{B}_{R}^{+}}\right) .
\end{equation*}
\end{proof}

\textbf{The composition method. }Let $\mathcal{B}_{L}^{+}$ be a positive
left ideal. Let $X$ be a Banach space, and $E$ a Banach lattice. A
polynomial $P\in \mathcal{P}(^{m}X;E)$ belongs to $\mathcal{B}_{L}^{+}\circ 
\mathcal{P}$ if there exist a Banach space $Y$, a polynomial $Q\in \mathcal{P%
}(^{m}X;Y)$, and an operator $u\in \mathcal{B}_{L}^{+}(Y;E)$ such that 
\begin{equation*}
\begin{array}{ccc}
X & \overset{P}{\longrightarrow } & E \\ 
Q\downarrow & \nearrow u &  \\ 
Y &  & 
\end{array}%
\end{equation*}%
i.e., $P=u\circ Q$. In this case, we denote $P\in \mathcal{B}_{L}^{+}\circ 
\mathcal{P}(^{m}X;E).$

\begin{remark}
By an argument analogous to that used in \cite[Proposition 3.3]{BPR07}, the
class $\mathcal{B}_{L}^{+}\circ \mathcal{P}$ forms a positive left
polynomial ideal.
\end{remark}

If $\mathcal{B}_{L}^{+}$ is a positive left Banach ideal, we define%
\begin{equation}
\Vert P\Vert _{\mathcal{B}_{L}^{+}\circ \mathcal{P}}=\inf \{\Vert u\Vert _{%
\mathcal{B}_{L}^{+}}\Vert Q\Vert \},  \label{2.3}
\end{equation}%
where the infimum is taken over all possible factorizations of $P$ as
described above. Similarly to \cite[Proposition 3.7 ]{BPR07}, if $\mathcal{B}%
_{L}^{+}$ is a positive left Banach ideal, the pair $\left( \mathcal{B}%
_{L}^{+}\circ \mathcal{P},\Vert \cdot \Vert _{\mathcal{B}_{L}^{+}\circ 
\mathcal{P}}\right) $ forms a positive left Banach polynomial ideal. We have
the following result.

\begin{proposition}
\label{PropoCom}Let $\mathcal{B}_{L}^{+}$ be a positive left ideal. Let $X$
be a Banach space and $E$ a Banach lattice. For $P\in \mathcal{P}(^{m}X;E),$
the following statements are equivalent:\newline
1) The polynomial $P$ belongs to $\mathcal{B}_{L}^{+}\circ \mathcal{P}%
(^{m}X;E)$.\newline
2) The linearization $P_{L}$ belongs to $\mathcal{B}_{L}^{+}(\widehat{%
\otimes }_{\pi ,s}^{m}X;E).$\newline
Consequently, we obtain the following isometric identification%
\begin{equation*}
\mathcal{B}_{L}^{+}\circ \mathcal{P}(^{m}X;E)=\mathcal{B}_{L}^{+}(\widehat{%
\otimes }_{\pi ,s}^{m}X;E).
\end{equation*}
\end{proposition}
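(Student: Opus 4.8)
The plan is to pivot everything on the universal factorization $P=P_{L}\circ \delta _{m}$ recorded in Section 1, on the isometric identification $\mathcal{P}(^{m}X;E)=\mathcal{L}(\widehat{\otimes }_{\pi ,s}^{m}X;E)$, and on the elementary fact that post-composition with a linear operator commutes with linearization. The implication $2)\Rightarrow 1)$ is then immediate: assuming $P_{L}\in \mathcal{B}_{L}^{+}(\widehat{\otimes }_{\pi ,s}^{m}X;E)$, I would set $Y=\widehat{\otimes }_{\pi ,s}^{m}X$, take $Q=\delta _{m}\in \mathcal{P}(^{m}X;Y)$ and $u=P_{L}$, so that the commuting diagram $P=P_{L}\circ \delta _{m}$ exhibits $P$ as $u\circ Q$ with $u\in \mathcal{B}_{L}^{+}(Y;E)$, whence $P\in \mathcal{B}_{L}^{+}\circ \mathcal{P}(^{m}X;E)$.

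For the converse $1)\Rightarrow 2)$, suppose $P=u\circ Q$ with $Q\in \mathcal{P}(^{m}X;Y)$ and $u\in \mathcal{B}_{L}^{+}(Y;E)$, and let $Q_{L}\in \mathcal{L}(\widehat{\otimes }_{\pi ,s}^{m}X;Y)$ be the linearization of $Q$. The crucial step is to establish the operator identity $P_{L}=u\circ Q_{L}$. Both sides are bounded linear maps on $\widehat{\otimes }_{\pi ,s}^{m}X$, and on a diagonal tensor $\delta _{m}(x)=x\otimes \overset{(m)}{...}\otimes x$ they coincide, since
\[
(u\circ Q_{L})(\delta _{m}(x))=u(Q(x))=P(x)=P_{L}(\delta _{m}(x)).
\]
Because $\otimes _{s}^{m}X$ is, by its very definition, the linear span of such diagonal tensors and is dense in $\widehat{\otimes }_{\pi ,s}^{m}X$, continuity forces $P_{L}=u\circ Q_{L}$ everywhere. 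Here $Q_{L}$ is an arbitrary bounded linear operator and $u\in \mathcal{B}_{L}^{+}(Y;E)$, so applying the positive left ideal property (composing $u$ on the right with the arbitrary linear operator $Q_{L}$ and on the left with $\mathrm{id}_{E}\in \mathcal{L}^{+}(E;E)$) yields $u\circ Q_{L}\in \mathcal{B}_{L}^{+}(\widehat{\otimes }_{\pi ,s}^{m}X;E)$, that is $P_{L}\in \mathcal{B}_{L}^{+}(\widehat{\otimes }_{\pi ,s}^{m}X;E)$.

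Finally, to obtain the isometric identification I would track the norms through the two constructions. From the direction just proved, for any factorization $P=u\circ Q$ one has $\Vert P_{L}\Vert _{\mathcal{B}_{L}^{+}}=\Vert u\circ Q_{L}\Vert _{\mathcal{B}_{L}^{+}}\leq \Vert u\Vert _{\mathcal{B}_{L}^{+}}\Vert Q_{L}\Vert =\Vert u\Vert _{\mathcal{B}_{L}^{+}}\Vert Q\Vert$, and taking the infimum over all factorizations gives $\Vert P_{L}\Vert _{\mathcal{B}_{L}^{+}}\leq \Vert P\Vert _{\mathcal{B}_{L}^{+}\circ \mathcal{P}}$. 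Conversely, the factorization $P=P_{L}\circ \delta _{m}$ yields $\Vert P\Vert _{\mathcal{B}_{L}^{+}\circ \mathcal{P}}\leq \Vert P_{L}\Vert _{\mathcal{B}_{L}^{+}}\Vert \delta _{m}\Vert =\Vert P_{L}\Vert _{\mathcal{B}_{L}^{+}}$, using that $\Vert \delta _{m}\Vert =1$ since $\Vert x\otimes \overset{(m)}{...}\otimes x\Vert \leq \Vert x\Vert ^{m}$ in the symmetric projective norm. The two inequalities give the isometry. I expect the only delicate point to be the identity $P_{L}=u\circ Q_{L}$: it rests on the density of the diagonal tensors in $\widehat{\otimes }_{\pi ,s}^{m}X$ together with the uniqueness of the linearization, after which the whole statement reduces to the positive ideal property and routine norm bookkeeping.
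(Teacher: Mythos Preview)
Your proof is correct and follows essentially the same route as the paper: both directions hinge on the factorization $P=P_{L}\circ \delta_{m}$ and on the identity $P_{L}=u\circ Q_{L}$, followed by the positive left ideal property and the same norm estimates. The paper simply asserts $P_{L}=u\circ Q_{L}$ whereas you justify it via density of diagonal tensors, and the paper appends an explicit surjectivity check (showing every $R\in\mathcal{B}_{L}^{+}(\widehat{\otimes}_{\pi,s}^{m}X;E)$ arises as some $(P_{R})_{L}$) which you rightly absorb into the already-stated isometry $\mathcal{P}(^{m}X;E)=\mathcal{L}(\widehat{\otimes}_{\pi,s}^{m}X;E)$.
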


\begin{proof}
$1)\Longrightarrow 2):$ Let $P\in \mathcal{B}_{L}^{+}\circ \mathcal{P}%
(^{m}X;E).$ Then there exist $Q\in \mathcal{P}(^{m}X;Y)$ and $u\in \mathcal{B%
}_{L}^{+}(Y;E)$ such that $P=u\circ Q.$ Since $P_{L}=u\circ Q_{L},$ the
positive ideal property implies $P_{L}\in \mathcal{B}_{L}^{+}(\widehat{%
\otimes }_{\pi ,s}^{m}X;E).$ Moreover, as $P=P_{L}\circ \delta _{m}$ with $%
\delta _{m}\in \mathcal{P}\left( ^{m}X;\widehat{\otimes }_{\pi
,s}^{m}X\right) $ and $\left\Vert \delta _{m}\right\Vert =1.$ By $(\ref{2.3}%
) $ 
\begin{eqnarray*}
\left\Vert P_{L}\right\Vert _{\mathcal{B}_{L}^{+}} &=&\left\Vert u\circ
Q_{L}\right\Vert _{\mathcal{B}_{L}^{+}} \\
&\leq &\left\Vert u\right\Vert _{\mathcal{B}_{L}^{+}}\left\Vert
Q_{L}\right\Vert =\left\Vert u\right\Vert _{\mathcal{B}_{L}^{+}}\left\Vert
Q\right\Vert .
\end{eqnarray*}%
Taking the infimum over all such representations of $P$, it follows that%
\begin{equation*}
\left\Vert P_{L}\right\Vert _{\mathcal{B}_{L}^{+}}\leq \left\Vert
P\right\Vert _{\mathcal{B}_{L}^{+}\circ \mathcal{P}}.
\end{equation*}%
$2)\Longrightarrow 1):$ Suppose that $P_{L}\in \mathcal{B}_{L}^{+}(\widehat{%
\otimes }_{\pi ,s}^{m}X;E)$. Then 
\begin{equation*}
P=P_{L}\circ \delta _{m}\in \mathcal{B}_{L}^{+}\circ \mathcal{P}(^{m}X;E).
\end{equation*}%
Furthermore,%
\begin{equation*}
\left\Vert P_{L}\right\Vert _{\mathcal{B}_{L}^{+}}=\left\Vert
P_{L}\right\Vert _{\mathcal{B}_{L}^{+}}\left\Vert \delta _{m}\right\Vert
\geq \left\Vert P\right\Vert _{\mathcal{B}_{L}^{+}\circ \mathcal{P}}.
\end{equation*}%
For the surjectivity, let $R\in \mathcal{B}_{L}^{+}(\widehat{\otimes }_{\pi
,s}^{m}X;E).$ Define 
\begin{eqnarray*}
P_{R}\left( x\right) &=&R\left( x\otimes \overset{\left( m\right) }{...}%
\otimes x\right) \\
&=&R\circ \delta _{m}\left( x,...,x\right) .
\end{eqnarray*}%
We have $P_{R}\in \mathcal{P}(^{m}X;E)$ and 
\begin{equation*}
\widehat{R_{R}}=R\circ \delta _{m}.
\end{equation*}%
We verify that $\left( P_{R}\right) _{L}=R.$ Indeed, for any $%
\sum\limits_{i=1}^{n}\lambda _{i}x_{i}\otimes \overset{\left( m\right) }{...}%
\otimes x_{i}\in \otimes _{\pi ,s}^{m}X$,%
\begin{eqnarray*}
\left( P_{R}\right) _{L}(\sum\limits_{i=1}^{n}\lambda _{i}x_{i}\otimes 
\overset{\left( m\right) }{...}\otimes x_{i})
&=&\sum\limits_{i=1}^{n}\lambda _{i}\left( P_{R}\right) _{L}(x_{i}\otimes 
\overset{\left( m\right) }{...}\otimes x_{i}) \\
&=&\sum\limits_{i=1}^{n}\lambda _{i}P_{R}\left( x_{i}\right)
=\sum\limits_{i=1}^{n}\lambda _{i}R\left( x_{i}\otimes \overset{\left(
m\right) }{...}\otimes x_{i}\right) \\
&=&R(\sum\limits_{i=1}^{n}\lambda _{i}x_{i}\otimes \overset{\left( m\right) }%
{...}\otimes x_{i}).
\end{eqnarray*}%
Thus $\left( P_{R}\right) _{L}$ and $R$ coincide on $\otimes _{\pi ,s}^{m}X$%
, and by density they coincide on the whole space $\widehat{\otimes }_{\pi
,s}^{m}X.$
\end{proof}

\textbf{The factorization method. }Let $\mathcal{B}_{R}^{+}$ be a positive
right Banach ideal. We define the class $\mathcal{P}(\mathcal{B}_{R}^{+})$
as follows: Let $Y$ be a Banach space and $E$ a Banach lattice. A polynomial 
$P$ belongs to $\mathcal{P}(\mathcal{B}_{R}^{+})(^{m}E;Y)$ if there exist
Banach space $X,$ an operator $u\in \mathcal{B}_{R}^{+}(E;X),$ and a
polynomial $Q\in \mathcal{P}(^{m}X;Y)$ such that%
\begin{equation*}
\begin{array}{ccc}
E & \overset{P}{\longrightarrow } & Y \\ 
u\downarrow  & \nearrow Q &  \\ 
X &  & 
\end{array}%
\end{equation*}%
i.e., $P=Q\circ u$. In this case, for every $P\in \mathcal{P}(\mathcal{B}%
_{R}^{+})$ we define%
\begin{equation*}
\Vert P\Vert _{\mathcal{P}(\mathcal{B}_{R}^{+})}=\inf \{\Vert Q\Vert \Vert
u\Vert _{\mathcal{B}_{R}^{+}}^{m}\},
\end{equation*}%
where the infimum is taken over all possible factorizations of $P$ as
described above.

\begin{proposition}
Let $\mathcal{B}_{R}^{+}$ be a positive right Banach ideal. Then the pair $%
\left( \mathcal{P}(\mathcal{B}_{R}^{+}),\Vert \cdot \Vert _{\mathcal{P}(%
\mathcal{B}_{R}^{+})}\right) $ forms a positive right quasi-Banach
polynomial ideal.
\end{proposition}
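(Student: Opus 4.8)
The plan is to verify the defining axioms of a positive right quasi-Banach polynomial ideal one at a time, following the mirror image of the arguments already carried out for $\mathcal{P}_{L}^{+}\circ\mathcal{B}_{R}^{+}$ in Propositions \ref{Propo} and \ref{Propo1} and in Theorem \ref{The1}. The only structural change is that the outer factor is now an arbitrary polynomial $Q\in\mathcal{P}(^{m}X;Y)$, measured in the ordinary supremum norm, rather than a $\mathcal{P}_{L}^{+}$-polynomial, and the positive operator sits on the domain (right) side, so that the range $Y$ may be an arbitrary Banach space. First I would show that each component $\mathcal{P}(\mathcal{B}_{R}^{+})(^{m}E;Y)$ is a linear subspace of $\mathcal{P}(^{m}E;Y)$ containing the finite-type polynomials. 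Scalar homogeneity is immediate from $\lambda(Q\circ u)=(\lambda Q)\circ u$. For stability under addition, given $P_{j}=Q_{j}\circ u_{j}$ with $u_{j}\in\mathcal{B}_{R}^{+}(E;X_{j})$ and $Q_{j}\in\mathcal{P}(^{m}X_{j};Y)$, I would set $A=i_{1}\circ u_{1}+i_{2}\circ u_{2}\in\mathcal{B}_{R}^{+}(E;X_{1}\times X_{2})$ and $B=Q_{1}\circ\pi_{1}+Q_{2}\circ\pi_{2}\in\mathcal{P}(^{m}(X_{1}\times X_{2});Y)$, with $i_{j},\pi_{j}$ the canonical injections and projections, and check $P_{1}+P_{2}=B\circ A$ exactly as in Proposition \ref{Propo}. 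Using the subspace property, a finite-type polynomial reduces to its summands $\varphi^{m}y$ (with $\varphi\in E^{\ast}$, $y\in Y$), each of which factors as $Q\circ u$ with $u=\varphi\in\mathcal{B}_{R}^{+}(E;\mathbb{K})$---a functional is a finite-rank operator and hence lies in every operator ideal---and $Q(\lambda)=\lambda^{m}y$.

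Next I would treat the positive right ideal property and the norm estimates. For $P=Q_{0}\circ u_{0}$, $u\in\mathcal{L}^{+}(F;E)$ and $v\in\mathcal{L}(Y;Z)$, the identity $v\circ P\circ u=(v\circ Q_{0})\circ(u_{0}\circ u)$, together with $u_{0}\circ u\in\mathcal{B}_{R}^{+}(F;X)$ (the operator-ideal property, $u$ being positive on the domain) and $v\circ Q_{0}\in\mathcal{P}(^{m}X;Z)$, gives membership; the estimate $\|v\circ P\circ u\|_{\mathcal{P}(\mathcal{B}_{R}^{+})}\le\|v\|\,\|P\|_{\mathcal{P}(\mathcal{B}_{R}^{+})}\,\|u\|^{m}$ then follows by combining $\|v\circ Q_{0}\|\le\|v\|\,\|Q_{0}\|$ with $\|u_{0}\circ u\|_{\mathcal{B}_{R}^{+}}\le\|u_{0}\|_{\mathcal{B}_{R}^{+}}\|u\|$ and passing to the infimum over factorizations. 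Scalar homogeneity of $\|\cdot\|_{\mathcal{P}(\mathcal{B}_{R}^{+})}$ is proved as in Theorem \ref{The1}. Definiteness rests on comparing $\|P\|$ with the ideal norm: from $\|P(x)\|=\|Q(u(x))\|\le\|Q\|\,\|u\|^{m}\|x\|^{m}$, together with the bounds $\|u(x^{\pm})\|\le\|u\|_{\mathcal{B}_{R}^{+}}\|x^{\pm}\|$ (obtained from the positive maps $\lambda\mapsto\lambda x^{\pm}$) and the splitting $x=x^{+}-x^{-}$, which give $\|u\|\le2\|u\|_{\mathcal{B}_{R}^{+}}$, one obtains $\|P\|\le2^{m}\|Q\|\,\|u\|_{\mathcal{B}_{R}^{+}}^{m}$; the infimum then yields $\|P\|\le2^{m}\|P\|_{\mathcal{P}(\mathcal{B}_{R}^{+})}$, and any such constant suffices to force $P=0$ whenever $\|P\|_{\mathcal{P}(\mathcal{B}_{R}^{+})}=0$. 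Rescaling $P=(\|u\|_{\mathcal{B}_{R}^{+}}^{m}Q)\circ(u/\|u\|_{\mathcal{B}_{R}^{+}})$ shows, by the same argument used for $\mathcal{P}_{L}^{+}\circ\mathcal{B}_{R}^{+}$, that $\|P\|_{\mathcal{P}(\mathcal{B}_{R}^{+})}=\inf\{\|Q\|:P=Q\circ u,\ \|u\|_{\mathcal{B}_{R}^{+}}=1\}$; finally the factorization $P^{m}=(id_{\mathbb{K}})^{m}\circ id_{\mathbb{K}}$ gives $\|P^{m}\|_{\mathcal{P}(\mathcal{B}_{R}^{+})}\le1$, while the reverse inequality follows from $\|P^{m}\|=1$ and the previous comparison.

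The main obstacle is the quasi-triangle inequality, where $m$-homogeneity produces a genuine multiplicative constant. Using the normalized description of the norm I would choose, for each $\varepsilon>0$, factorizations $P_{j}=Q_{j}\circ u_{j}$ with $\|u_{j}\|_{\mathcal{B}_{R}^{+}}=1$ and $\|Q_{j}\|\le\|P_{j}\|_{\mathcal{P}(\mathcal{B}_{R}^{+})}+\varepsilon$. Equipping $X_{1}\times X_{2}$ with the maximum norm makes $\|i_{j}\|=\|\pi_{j}\|=1$, so the sum construction yields $\|A\|_{\mathcal{B}_{R}^{+}}\le\|u_{1}\|_{\mathcal{B}_{R}^{+}}+\|u_{2}\|_{\mathcal{B}_{R}^{+}}=2$ and $\|B\|\le\|Q_{1}\|+\|Q_{2}\|$, whence
\begin{equation*}
\|P_{1}+P_{2}\|_{\mathcal{P}(\mathcal{B}_{R}^{+})}\le\|B\|\,\|A\|_{\mathcal{B}_{R}^{+}}^{m}\le2^{m}\bigl(\|P_{1}\|_{\mathcal{P}(\mathcal{B}_{R}^{+})}+\|P_{2}\|_{\mathcal{P}(\mathcal{B}_{R}^{+})}\bigr)
\end{equation*}
after letting $\varepsilon\to0$; this is the quasi-triangle inequality with constant $2^{m}$. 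Completeness of each component is then obtained by the standard absolutely-convergent-series criterion for quasi-Banach ideals, exactly as in the linear theory, so that $(\mathcal{P}(\mathcal{B}_{R}^{+}),\|\cdot\|_{\mathcal{P}(\mathcal{B}_{R}^{+})})$ is a positive right quasi-Banach polynomial ideal.
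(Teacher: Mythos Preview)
Your proof follows the same outline as the paper's, which itself just points back to Proposition~\ref{Propo} and Theorem~\ref{The1}; the subspace, finite-type, positive-right ideal, and quasi-triangle verifications are all correct as you have written them.

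There is, however, one step that does not work as stated: your argument for the lower bound $\|P^{m}\|_{\mathcal{P}(\mathcal{B}_{R}^{+})}\ge 1$. You appeal to $\|P^{m}\|=1$ together with the comparison $\|P\|\le 2^{m}\|P\|_{\mathcal{P}(\mathcal{B}_{R}^{+})}$ just established, but that combination only yields $\|P^{m}\|_{\mathcal{P}(\mathcal{B}_{R}^{+})}\ge 2^{-m}$, which is not enough for axiom~(b). The remedy is to argue directly as in Proposition~\ref{Propo1}(2): for any factorization $P^{m}=Q_{0}\circ u_{0}$ with $u_{0}:\mathbb{K}\to X$, write $u_{0}(\lambda)=\lambda x_{0}$ and choose $\psi\in B_{X^{\ast}}$ with $|\psi(x_{0})|=\|x_{0}\|$; since the positive \emph{right} ideal permits left composition by arbitrary linear operators, $\psi\circ u_{0}=\psi(x_{0})\,id_{\mathbb{K}}$ gives $\|x_{0}\|=\|\psi\circ u_{0}\|_{\mathcal{B}_{R}^{+}}\le\|u_{0}\|_{\mathcal{B}_{R}^{+}}$, and hence $\|Q_{0}\|\,\|u_{0}\|_{\mathcal{B}_{R}^{+}}^{m}\ge\|Q_{0}\|\,\|x_{0}\|^{m}\ge 1$. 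The point is that for operators with domain $\mathbb{K}$ one has $\|u_{0}\|\le\|u_{0}\|_{\mathcal{B}_{R}^{+}}$ with no extra factor, whereas your general comparison, obtained via the splitting $x=x^{+}-x^{-}$ on an arbitrary lattice $E$, necessarily carries the constant $2^{m}$.
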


\begin{proof}
We first verify the positive ideal property. Let $P\in \mathcal{P}(\mathcal{B%
}_{R}^{+})(^{m}E;Y),u\in \mathcal{L}^{+}\left( G;E\right) $ and $v\in 
\mathcal{L}\left( Y;Z\right) $. Suppose $P=Q_{0}\circ u_{0}$ is a
factorization of $P$. Then%
\begin{equation*}
v\circ P\circ u=v\circ \left( Q_{0}\circ u_{0}\right) \circ u
\end{equation*}%
Since $\left( u_{0}\circ u\right) \in \mathcal{B}_{R}^{+}\left( G;X\right) $
and $\left( v\circ Q_{0}\right) \in \mathcal{P}\left( ^{m}X;Z\right) $ we get%
\begin{equation*}
\left( v\circ Q_{0}\right) \circ \left( u_{0}\circ u\right) \in \mathcal{P}(%
\mathcal{B}_{R}^{+})\left( ^{m}G;Z\right) .
\end{equation*}%
The remaining steps follow exactly as in the proof of Proposition \ref{Propo}%
. For the quasi-norm, the argument is as in Theorem \ref{The1}, which shows
that $\Vert \cdot \Vert _{\mathcal{P}(\mathcal{B}_{R}^{+})}$ indeed defines
a quasi-norm.
\end{proof}

\section{\textsc{Examples of positive polynomial ideals}}

In this section, we give several classes of polynomials that serve as
concrete examples of positive polynomial ideals. These classes illustrate
the notions and properties discussed in the previous section. They include
polynomials that are Cohen positive strongly $p$-summing, Cohen positive $p$%
-nuclear, positive $p$-dominated and positive $\left( q,r\right) $%
-dominated. Studying these classes helps to understand how positivity
interacts with factorization, domination, and structural aspects in the
theory of polynomial ideals.

\subsection{\textsc{Cohen positive strongly }$p$\textsc{-summing polynomials}%
}

Hamdi et al. \cite{hamdi}, introduced the notion of Cohen positive strongly $%
p$-summing polynomials. A polynomial $P\in \mathcal{P}\left( ^{m}X;E\right) $
is said to be Cohen positive strongly $p$-summing if there exists a constant 
$C>0$ such that for any $(x_{i})_{i=1}^{n}\subset X$ and $\left( y_{i}^{\ast
}\right) _{i=1}^{n}\subset E^{\ast +}$, the following inequality holds: 
\begin{equation}
\sum_{i=1}^{n}\left\vert \left\langle P\left( x_{i}\right) ,y_{i}^{\ast
}\right\rangle \right\vert \leq C(\sum\limits_{i=1}^{n}\left\Vert
x_{i}\right\Vert ^{mp})^{\frac{1}{p}}\left\Vert \left( y_{i}^{\ast }\right)
_{i=1}^{n}\right\Vert _{p^{\ast },w}.  \label{3.1}
\end{equation}%
The space consisting of all such mappings is denoted by $\mathcal{P}%
_{Coh,p}^{+}\left( ^{m}X;E\right) $. In this case, we define 
\begin{equation*}
d_{p}^{m+}\left( P\right) =\inf \{C>0:C\text{ \textit{satisfies }}(\ref{3.1}%
)\}.
\end{equation*}

\begin{proposition}
The class $\mathcal{P}_{Coh,p}^{+}$ is a positive left Banach polynomial
ideal, obtained by the composition method from the positive left ideal $%
\mathcal{D}_{p}^{+}.$ More precisely, for every Banach space $X$ and Banach
lattice $E$%
\begin{equation*}
\mathcal{P}_{Coh,p}^{+}\left( ^{m}X;E\right) =\mathcal{D}_{p}^{+}\circ 
\mathcal{P}\left( ^{m}X;E\right) .
\end{equation*}
\end{proposition}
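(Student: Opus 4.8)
The plan is to transfer the asserted identity to the level of the linearization $P_{L}$ and then read off the Cohen condition as an operator condition. By Proposition \ref{PropoCom} applied to the positive left ideal $\mathcal{D}_{p}^{+}$ of positive (Cohen) strongly $p$-summing operators, one has the isometric identification
\[
\mathcal{D}_{p}^{+}\circ \mathcal{P}(^{m}X;E)=\mathcal{D}_{p}^{+}(\widehat{\otimes }_{\pi ,s}^{m}X;E),\qquad \Vert P\Vert _{\mathcal{D}_{p}^{+}\circ \mathcal{P}}=\Vert P_{L}\Vert _{\mathcal{D}_{p}^{+}},
\]
realised through $P=P_{L}\circ \delta _{m}$. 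Thus it suffices to prove that $P\in \mathcal{P}_{Coh,p}^{+}(^{m}X;E)$ if and only if $P_{L}\in \mathcal{D}_{p}^{+}(\widehat{\otimes }_{\pi ,s}^{m}X;E)$, together with the equality of the associated constants $d_{p}^{m+}(P)=\Vert P_{L}\Vert _{\mathcal{D}_{p}^{+}}$.

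I would rely on two auxiliary facts. The first is the description of the symmetric projective norm,
\[
\Vert z\Vert =\inf \Big\{ \sum_{j}|\lambda _{j}|\,\Vert x_{j}\Vert ^{m}:z=\sum_{j}\lambda _{j}\,x_{j}\otimes \overset{(m)}{...}\otimes x_{j}\Big\} ,
\]
which in particular yields $\Vert \delta _{m}(x)\Vert =\Vert x\Vert ^{m}$ (the inequality ``$\geq $'' coming from testing against the norm-one polynomials $\varphi ^{m}$, $\varphi \in B_{X^{\ast }}$). The second is the Pietsch domination theorem for the class $\mathcal{P}_{Coh,p}^{+}$ from \cite{hamdi}: $P$ is Cohen positive strongly $p$-summing with constant $C$ precisely when there is a Radon probability measure $\mu $ on $B_{E^{\ast \ast }}^{+}$ such that
\[
\left\vert \langle P(x),y^{\ast }\rangle \right\vert \leq C\,\Vert x\Vert ^{m}\Big( \int_{B_{E^{\ast \ast }}^{+}}\langle \psi ,y^{\ast }\rangle ^{p^{\ast }}\,d\mu (\psi )\Big) ^{1/p^{\ast }}
\]
for all $x\in X$ and $y^{\ast }\in E^{\ast +}$.

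For the implication $P_{L}\in \mathcal{D}_{p}^{+}\Rightarrow P\in \mathcal{P}_{Coh,p}^{+}$, I would feed the diagonal tensors $z_{i}=\delta _{m}(x_{i})$ into the defining operator inequality for $P_{L}$ and use $P_{L}(\delta _{m}(x_{i}))=P(x_{i})$ together with $\Vert \delta _{m}(x_{i})\Vert =\Vert x_{i}\Vert ^{m}$; this returns $(\ref{3.1})$ and gives $d_{p}^{m+}(P)\leq \Vert P_{L}\Vert _{\mathcal{D}_{p}^{+}}$. For the converse, starting from the domination of $P$ and using the linearity of $P_{L}$ on a representation $z=\sum_{j}\lambda _{j}\,x_{j}\otimes \overset{(m)}{...}\otimes x_{j}$, followed by the infimum over all such representations and the symmetric projective norm formula, I obtain the pointwise operator domination
\[
\left\vert \langle P_{L}(z),y^{\ast }\rangle \right\vert \leq C\,\Vert z\Vert \Big( \int_{B_{E^{\ast \ast }}^{+}}\langle \psi ,y^{\ast }\rangle ^{p^{\ast }}\,d\mu (\psi )\Big) ^{1/p^{\ast }},\qquad y^{\ast }\in E^{\ast +}.
\]
Summing this over a finite family $\left( z_{i},y_{i}^{\ast }\right) $, applying Hölder's inequality with exponents $p$ and $p^{\ast }$, then interchanging sum and integral and using that $\mu $ is a probability measure, yields the operator Cohen inequality for $P_{L}$ with $\Vert P_{L}\Vert _{\mathcal{D}_{p}^{+}}\leq C=d_{p}^{m+}(P)$. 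The two inequalities give $d_{p}^{m+}(P)=\Vert P_{L}\Vert _{\mathcal{D}_{p}^{+}}$, identifying $\mathcal{P}_{Coh,p}^{+}$ isometrically with $\mathcal{D}_{p}^{+}\circ \mathcal{P}$; since $\mathcal{D}_{p}^{+}$ is a positive left Banach ideal, the composition-method result recorded after $(\ref{2.3})$ then endows $\mathcal{P}_{Coh,p}^{+}$ with the structure of a positive left Banach polynomial ideal.

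The crux is the converse direction. Inequality $(\ref{3.1})$ controls $P$ only on the diagonal rank-one tensors $x\otimes \overset{(m)}{...}\otimes x$, whereas membership of $P_{L}$ in $\mathcal{D}_{p}^{+}$ is a statement about all of $\widehat{\otimes }_{\pi ,s}^{m}X$. Expanding each $z_{i}=\sum_{j}\lambda _{ij}\,x_{ij}\otimes \overset{(m)}{...}\otimes x_{ij}$ and substituting directly into the summing inequality fails, because repeating the functional $y_{i}^{\ast }$ over the index $j$ inflates the weak $\ell _{p^{\ast }}$-norm by the lengths of the representations, which are not under control. Passing through the domination theorem removes this difficulty, since the pointwise estimate is indifferent to the chosen decomposition of $z$; this is exactly why the Pietsch factorization property recalled above is the decisive ingredient.
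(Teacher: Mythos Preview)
Your proposal is correct and follows the same overall route as the paper: reduce via Proposition~\ref{PropoCom} to showing that $P\in\mathcal{P}_{Coh,p}^{+}(^{m}X;E)$ if and only if $P_{L}\in\mathcal{D}_{p}^{+}(\widehat{\otimes}_{\pi,s}^{m}X;E)$ with equal norms. The paper dispatches this last step by citing \cite[Proposition~6]{hamdi} directly, whereas you re-derive that linearization characterization from the Pietsch domination theorem in the same reference; your argument through the symmetric projective norm and H\"older is sound, and your observation about why the naive substitution fails (and why domination rescues it) is exactly the content of that cited proposition.
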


\begin{proof}
Directly by Proposition \ref{PropoCom} and \cite[Proposition 6]{hamdi}.
\end{proof}

\subsection{\textsc{Positive Cohen }$p$\textsc{-nuclear polynomials}}

Achour and Alouani \cite{AA10} introduced the notion of Cohen $p$-nuclear
multilinear operators as a natural extension of the linear concept
originally proposed by Cohen \cite{Coh73}. The polynomial counterpart was
later introduced and studied in \cite{AlouTh}. Hammou et al. \cite{Hammou}
subsequently developed the positive version of this notion.

\begin{definition}
Let $m\in \mathbb{N}$ and $1\leq p\leq \infty $. Let $E$ and $F$ be Banach
lattices. A polynomial $P\in \mathcal{P}\left( ^{m}E;F\right) $ is said to
be positive Cohen $p$-nuclear if there exists a constant $C>0$ such that for
any $(x_{i})_{i=1}^{n}\subset E^{+}$ and $\left( y_{i}^{\ast }\right)
_{i=1}^{n}\subset F^{\ast +}$, the following inequality holds: 
\begin{equation}
\sum_{i=1}^{n}\left\vert \left\langle P\left( x_{i}\right) ,y_{i}^{\ast
}\right\rangle \right\vert \leq C\sup_{x^{\ast }\in B_{E^{\ast }}^{+}}\left(
\sum\limits_{i=1}^{n}\left\langle x^{\ast },x_{i}\right\rangle ^{mp}\right)
^{\frac{1}{p}}\left\Vert \left( y_{i}^{\ast }\right) _{i=1}^{n}\right\Vert
_{p^{\ast },w}.  \label{3.2}
\end{equation}%
The space consisting of all such mappings is denoted by $\mathcal{P}_{N\text{%
-}p}^{c+}\left( ^{m}E;F\right) $. In this case, we define 
\begin{equation*}
n_{p}^{m+}\left( P\right) =\inf \{C>0:C\text{ \textit{satisfies }}(\ref{3.2}%
)\}.
\end{equation*}
\end{definition}

It is easy to verify that every Cohen $p$-nuclear is positive Cohen $p$%
-nuclear, i.e.,%
\begin{equation*}
\mathcal{P}_{p,N}^{c}\left( ^{m}E;F\right) \subset \mathcal{P}_{N\text{-}%
p}^{c+}\left( ^{m}E;F\right) .
\end{equation*}

\begin{proposition}
The class $\mathcal{P}_{N\text{-}p}^{c+}$ is a positive polynomial ideal
defined by, $\mathcal{P}_{N\text{-}p}^{c+}=\mathcal{P}_{Coh,p}^{+}\circ \Pi
_{p}^{+}.$ That is, for every pair of Banach lattices $E$ and $F$%
\begin{equation*}
\mathcal{P}_{N\text{-}p}^{c+}\left( ^{m}E;F\right) =\mathcal{P}%
_{Coh,p}^{+}\circ \Pi _{p}^{+}\left( ^{m}E;F\right) .
\end{equation*}
\end{proposition}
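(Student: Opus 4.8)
The plan is to establish the stated identity by proving the two set inclusions separately while tracking the constants, so as to obtain the isometric identification $n_{p}^{m+}(P)=\left\Vert P\right\Vert _{\mathcal{P}_{Coh,p}^{+}\circ \Pi _{p}^{+}}$. I would first observe that $\Pi _{p}^{+}$ is a positive right operator ideal and $\mathcal{P}_{Coh,p}^{+}$ a positive left polynomial ideal, so that by Proposition \ref{Propo} the composition $\mathcal{P}_{Coh,p}^{+}\circ \Pi _{p}^{+}$ is already a positive polynomial ideal; the genuine content of the statement is therefore the identification of its components with the positive Cohen $p$-nuclear polynomials defined through $(\ref{3.2})$.

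For the inclusion $\mathcal{P}_{Coh,p}^{+}\circ \Pi _{p}^{+}\subseteq \mathcal{P}_{N\text{-}p}^{c+}$, I would take a factorization $P=Q\circ u$ with $u\in \Pi _{p}^{+}(E;X)$ and $Q\in \mathcal{P}_{Coh,p}^{+}(^{m}X;F)$, and fix $(x_{i})_{i=1}^{n}\subset E^{+}$ and $(y_{i}^{\ast })_{i=1}^{n}\subset F^{\ast +}$. Applying the defining inequality $(\ref{3.1})$ of $Q$ to the sequence $(u(x_{i}))_{i=1}^{n}\subset X$ bounds $\sum_{i}|\langle P(x_{i}),y_{i}^{\ast }\rangle |$ by $d_{p}^{m+}(Q)(\sum_{i}\Vert u(x_{i})\Vert ^{mp})^{1/p}\Vert (y_{i}^{\ast })\Vert _{p^{\ast },w}$. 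The key analytic step is then to invoke the Pietsch domination $(\ref{DomiSumming})$ for $u$, which furnishes a probability measure $\mu $ on $B_{E^{\ast }}^{+}$ with $\Vert u(x)\Vert \leq \pi _{p}^{+}(u)(\int \langle x,x^{\ast }\rangle ^{p}d\mu )^{1/p}$ for $x\in E^{+}$; raising this to the power $mp$ and applying Jensen's inequality to the convex map $t\mapsto t^{m}$ (using that $\mu $ is a probability measure) yields $\Vert u(x)\Vert ^{mp}\leq \pi _{p}^{+}(u)^{mp}\int \langle x,x^{\ast }\rangle ^{mp}d\mu $. Summing over $i$ and recognizing that $\sup_{x^{\ast }\in B_{E^{\ast }}^{+}}(\sum_{i}\langle x^{\ast },x_{i}\rangle ^{mp})^{1/p}$ is precisely $\Vert (x_{i})\Vert _{mp,w}^{m}$ then gives exactly $(\ref{3.2})$ with constant $d_{p}^{m+}(Q)\,\pi _{p}^{+}(u)^{m}$, and taking the infimum over factorizations produces $n_{p}^{m+}(P)\leq \Vert P\Vert _{\mathcal{P}_{Coh,p}^{+}\circ \Pi _{p}^{+}}$.

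For the reverse inclusion $\mathcal{P}_{N\text{-}p}^{c+}\subseteq \mathcal{P}_{Coh,p}^{+}\circ \Pi _{p}^{+}$ I would carry out a Pietsch-type factorization. Starting from a polynomial $P$ satisfying $(\ref{3.2})$, the Pietsch domination theorem for positive Cohen $p$-nuclear polynomials developed in \cite{Hammou} produces a probability measure $\mu $ on $B_{E^{\ast }}^{+}$ controlling the domain side of the inequality. Using $\mu $ I would build the canonical operator $u\colon E\rightarrow X$, where $X$ is the closure of the canonical image of $E$ in the appropriate $L$-space over $\mu $ and $u(x)=[\langle x,\cdot \rangle ]$; the probability normalization makes $u$ positive $p$-summing with $\pi _{p}^{+}(u)\leq 1$. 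One then defines $Q$ on $X$ by $Q\circ u=P$, checks that it is well defined and, by density, extends to an $m$-homogeneous polynomial, and finally reads off from the domination that $Q$ satisfies $(\ref{3.1})$, i.e. $Q\in \mathcal{P}_{Coh,p}^{+}(^{m}X;F)$. This exhibits $P=Q\circ u\in \mathcal{P}_{Coh,p}^{+}\circ \Pi _{p}^{+}(^{m}E;F)$ with $\Vert P\Vert _{\mathcal{P}_{Coh,p}^{+}\circ \Pi _{p}^{+}}\leq n_{p}^{m+}(P)$, completing the isometry. A more structural alternative is to pass to linearizations: combining the identification $\mathcal{P}_{Coh,p}^{+}=\mathcal{D}_{p}^{+}\circ \mathcal{P}$ with Proposition \ref{PropoCom} reduces the claim to a factorization statement for the associated linear operators on $\widehat{\otimes }_{\pi ,s}^{m}E$.

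The step I expect to be the main obstacle is the reconciliation of the exponents in the reverse direction. The domain side of $(\ref{3.2})$ is measured by the weak $\ell _{mp}$-norm (raised to the power $m$), whereas membership of $u$ in $\Pi _{p}^{+}$ is governed by the exponent $p$; bridging this gap is exactly what Jensen's inequality accomplishes in the forward direction, and in the reverse direction it forces a careful choice of the Pietsch measure and of the $L$-space through which $P$ factors, so that the $m$-homogeneity of $Q$ absorbs the discrepancy between $p$ and $mp$. Verifying simultaneously that the canonical $u$ lands in $\Pi _{p}^{+}$ and that the induced $Q$ remains Cohen positive strongly $p$-summing is the delicate core of the argument; everything else, namely the ideal property and the norm estimates, is routine and follows the pattern already used for Proposition \ref{Propo} and the composition method.
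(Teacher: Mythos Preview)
Your proposal is correct and follows exactly the line of argument one expects from \cite[Theorem~9]{Hammou}, which is all the paper invokes here: the paper's own proof is the single sentence ``Directly by \cite[Theorem~9]{Hammou}.'' Your forward inclusion (Pietsch domination $(\ref{DomiSumming})$ for $u$ together with Jensen to pass from exponent $p$ to $mp$) and your reverse inclusion (canonical factorization through the closure of $u_{0}(E)$ in an $L_{p}$-space over the Pietsch measure, with the $m$-homogeneity of $Q$ absorbing the exponent shift) are precisely the expected content of that cited result; you have simply reconstructed what the paper chooses to outsource.
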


\begin{proof}
Directly by \cite[Theorem 9]{Hammou}.
\end{proof}

\subsection{\textsc{Positive }$p$\textsc{-dominated polynomials}}

The concept of positive $p$-dominated polynomials has been introduced by
Hamdi et al. \cite{hamdi}. A polynomial $P\in \mathcal{P}\left(
^{m}E;Y\right) $ is said to be positive $p$-dominated if there exists a
constant $C>0$ such that for any $(x_{i})_{i=1}^{n}\subset E^{+}$, the
following inequality holds: 
\begin{equation}
(\sum_{i=1}^{n}\left\Vert P\left( x_{i}\right) \right\Vert ^{\frac{p}{m}})^{%
\frac{m}{p}}\leq C\sup_{x^{\ast }\in B_{E^{\ast
}}^{+}}(\sum\limits_{i=1}^{n}\left\vert x^{\ast }\left( x_{i}\right)
\right\vert ^{p})^{\frac{m}{p}}.  \label{3.3}
\end{equation}%
The space consisting of all such mappings is denoted by $\mathcal{P}%
_{d,p}^{+}\left( ^{m}E;Y\right) $. In this case, we define 
\begin{equation*}
\delta _{p}^{+}\left( P\right) =\inf \{C>0:C\text{ \textit{satisfies }}(\ref%
{3.3})\}.
\end{equation*}%
We note that $p\geq m$, $\delta _{p}^{+}\left( \cdot \right) $ is a norm,
but for $p<m$, it is only a quasi-norm.

\begin{theorem}
\label{TheoD}\cite[Theorem 6]{hamdi} Let $1\leq p<\infty $; an $m$%
-homogeneous polynomial $P:E\rightarrow Y$ is positive $p$-dominated if
there are $C>0$ and a probability measure $\mu $ on $B_{E^{\ast }}^{+}$ such
that for every $x\in E^{+}$%
\begin{equation*}
\left\Vert P\left( x\right) \right\Vert \leq C\left( \int_{B_{E^{\ast
}}^{+}}\left\langle x^{\ast },x\right\rangle ^{p}d\mu \right) ^{\frac{m}{p}}.
\end{equation*}%
Moreover, the smallest $C$ is $\delta _{p}^{+}\left( P\right) .$
\end{theorem}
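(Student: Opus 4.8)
The plan is to prove this as a Pietsch-type domination theorem, establishing the two implications separately while tracking the constants so as to obtain the final equality. Throughout I would write $K:=B_{E^{\ast }}^{+}=B_{E^{\ast }}\cap E^{\ast +}$, which is weak-$*$ compact by Banach--Alaoglu together with the weak-$*$ closedness of $E^{\ast +}$, and observe that for each fixed $x\in E^{+}$ the map $x^{\ast }\mapsto \langle x^{\ast },x\rangle ^{p}$ is weak-$*$ continuous on $K$, hence lies in $C(K)$.

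For the easy implication (domination $\Rightarrow$ positive $p$-dominated), suppose the stated inequality holds with some $\mu $ and $C$. Given $(x_{i})_{i=1}^{n}\subset E^{+}$, I would raise the inequality to the power $p/m$, sum over $i$, and use that $\mu $ is a probability measure to bound $\int_{K}\sum_{i}\langle x^{\ast },x_{i}\rangle ^{p}\,d\mu $ by $\sup_{x^{\ast }\in K}\sum_{i}\langle x^{\ast },x_{i}\rangle ^{p}$; taking the $(m/p)$-th power recovers $(\ref{3.3})$ with the same constant, so $\delta _{p}^{+}(P)\leq C$. Thus $\delta _{p}^{+}(P)$ is a lower bound for the admissible constants.

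The substantive direction is the converse. Assuming $P$ positive $p$-dominated with $C:=\delta _{p}^{+}(P)$, I would first raise $(\ref{3.3})$ to the power $p/m$ to obtain the linearized estimate $\sum_{i}\Vert P(x_{i})\Vert ^{p/m}\leq C^{p/m}\sup_{x^{\ast }\in K}\sum_{i}\langle x^{\ast },x_{i}\rangle ^{p}$ for all finite families in $E^{+}$. For each $x\in E^{+}$ set $f_{x}(x^{\ast }):=\Vert P(x)\Vert ^{p/m}-C^{p/m}\langle x^{\ast },x\rangle ^{p}\in C(K)$, and let $\mathcal{H}\subseteq C(K)$ be the set of all finite sums $\sum_{j}f_{x_{j}}$ with $x_{j}\in E^{+}$. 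The key structural point is that $\mathcal{H}$ is a convex cone: concatenation of families gives additivity, while the substitution $x_{j}\mapsto t x_{j}$ scales both $\Vert P(x_{j})\Vert ^{p/m}$ and $\langle x^{\ast },x_{j}\rangle ^{p}$ by $t^{p}$ (here the $m$-homogeneity of $P$ is used), so $\lambda \mathcal{H}\subseteq \mathcal{H}$ for $\lambda >0$. The linearized estimate says precisely that $\inf_{x^{\ast }\in K}h(x^{\ast })\leq 0$ for every $h\in \mathcal{H}$.

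Finally I would run a minimax/Ky Fan argument to produce the measure. Let $Q\subset C(K)^{\ast }$ be the weak-$*$ compact convex set of regular Borel probability measures on $K$; it suffices to show that $\bigcap_{h\in \mathcal{H}}\{\mu \in Q:\int_{K}h\,d\mu \leq 0\}$ is nonempty. Each set in the intersection is weak-$*$ closed in the compact set $Q$, so by the finite intersection property I only need each finite subfamily $h_{1},\dots ,h_{k}$ to admit a common $\mu $. Writing $\Delta _{k}$ for the simplex, the pairing $(\mu ,a)\mapsto \int_{K}(\sum_{j}a_{j}h_{j})\,d\mu $ on $Q\times \Delta _{k}$ is affine and weak-$*$ continuous in $\mu $ and affine in $a$, so Sion's (or Ky Fan's) minimax theorem gives
\[
\min_{\mu \in Q}\max_{1\leq j\leq k}\int_{K}h_{j}\,d\mu =\max_{a\in \Delta _{k}}\min_{\mu \in Q}\int_{K}\Big(\sum_{j}a_{j}h_{j}\Big)\,d\mu =\max_{a\in \Delta _{k}}\min_{x^{\ast }\in K}\Big(\sum_{j}a_{j}h_{j}\Big)(x^{\ast })\leq 0,
\]
the middle equality because the minimum of $\int g\,d\mu $ over probability measures is attained at a Dirac mass and equals $\min_{K}g$, and the final inequality because $\sum_{j}a_{j}h_{j}\in \mathcal{H}$. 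Hence some $\mu \in Q$ satisfies $\int_{K}h_{j}\,d\mu \leq 0$ for all $j$, and the full intersection is nonempty. Applying this to the single-term functions $f_{x}$, the resulting $\mu $ obeys $\Vert P(x)\Vert ^{p/m}\leq C^{p/m}\int_{K}\langle x^{\ast },x\rangle ^{p}\,d\mu $ for every $x\in E^{+}$; taking $(m/p)$-th powers yields the domination inequality with constant $C=\delta _{p}^{+}(P)$, which combined with the easy direction identifies the least admissible $C$ as $\delta _{p}^{+}(P)$. The main obstacle is precisely this minimax step: checking that $\mathcal{H}$ is a genuine convex cone (the one place homogeneity of $P$ is essential) and that the minimax theorem applies so that the finite intersection property can be invoked.
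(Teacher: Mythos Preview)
Your argument is correct. Note, however, that the paper does not supply its own proof of this theorem: it is quoted verbatim as \cite[Theorem~6]{hamdi} and used as a black box. So there is no ``paper's proof'' to compare against here.

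That said, it is worth contrasting your approach with how the paper handles the closely analogous domination result, Theorem~\ref{thdo1}. There the authors do not run a direct Ky~Fan/minimax argument as you do; instead they encode the problem in the abstract $R_{1},R_{2}$--$S$ framework of Pellegrino--Santos--Sep\'ulveda \cite[Theorem~4.6]{PSS12} and read off the measure from that general machinery. Your route is the classical hands-on one: build the cone $\mathcal{H}\subset C(K)$, observe that the defining inequality $(\ref{3.3})$ (after raising to the power $p/m$) says every $h\in\mathcal{H}$ has nonpositive minimum on $K$, and then separate via minimax and the finite intersection property to produce $\mu$. The point where $m$-homogeneity enters---namely that $f_{tx}=t^{p}f_{x}$, making $\mathcal{H}$ a genuine cone so that simplex combinations $\sum_{j}a_{j}h_{j}$ stay in $\mathcal{H}$---is exactly the step that the abstract PSS theorem packages away. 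Your argument is more elementary and self-contained; the PSS approach is more modular and would let one treat this result and Theorem~\ref{thdo1} uniformly without redoing the convexity bookkeeping.
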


The authors in \cite{hamdi} did not provide a factorization result for this
class. In what follows, we present a version of Kwapie\'{n}'s theorem
concerning the class of positive $p$-dominated polynomials. This allows us
to establish that this class can be interpreted through the factorization
method from the positive class $\Pi _{p}^{+}.$

\begin{theorem}
Let $m\in \mathbb{N}$ and $1\leq p<\infty $. An $m$-homogeneous polynomial $%
P:E\rightarrow Y$ is positive $p$-dominated if and only if, there exist a
Banach space $X$, a positive $p$-summing operator $u:E\rightarrow X,$ and a
polynomial $Q\in \mathcal{P}\left( ^{m}X;Y\right) $ such that%
\begin{equation*}
P=Q\circ u.
\end{equation*}%
Moreover, 
\begin{equation*}
\delta _{p}^{+}(P)=\inf \left\{ \left\Vert Q\right\Vert \pi _{p}^{+}\left(
u\right) ^{m}:P=Q\circ u\right\} .
\end{equation*}
\end{theorem}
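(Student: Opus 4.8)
The plan is to prove the two implications separately while tracking the constants, so that the quantitative claim $\delta_p^+(P)=\inf\{\|Q\|\,\pi_p^+(u)^m:P=Q\circ u\}$ comes out at the end. The reverse implication is routine and yields one inequality; the forward implication contains the real content (the construction of the factorization) and yields the other.

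\textbf{The easy inequality.} Suppose $P=Q\circ u$ with $u\in\Pi_p^+(E;X)$ and $Q\in\mathcal{P}(^mX;Y)$. By Zhukova's domination theorem $(\ref{DomiSumming})$ there is a probability measure $\mu$ on $B_{E^*}^+$ with $\|u(x)\|\le\pi_p^+(u)\big(\int_{B_{E^*}^+}\langle x^*,x\rangle^p\,d\mu(x^*)\big)^{1/p}$ for every $x\in E^+$. For such $x$ one estimates
\begin{equation*}
\|P(x)\|=\|Q(u(x))\|\le\|Q\|\,\|u(x)\|^m\le\|Q\|\,\pi_p^+(u)^m\Big(\int_{B_{E^*}^+}\langle x^*,x\rangle^p\,d\mu(x^*)\Big)^{m/p}.
\end{equation*}
By Theorem \ref{TheoD} this shows $P$ is positive $p$-dominated with $\delta_p^+(P)\le\|Q\|\,\pi_p^+(u)^m$; taking the infimum over factorizations gives $\delta_p^+(P)\le\inf\{\|Q\|\,\pi_p^+(u)^m\}$.

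\textbf{The factorization.} Conversely, assume $P$ is positive $p$-dominated. I would first invoke Theorem \ref{TheoD} to produce a probability measure $\mu$ on $B_{E^*}^+$ with
\begin{equation*}
\|P(x)\|\le\delta_p^+(P)\Big(\int_{B_{E^*}^+}\langle x^*,x\rangle^p\,d\mu(x^*)\Big)^{m/p},\qquad x\in E^+.
\end{equation*}
Then I would introduce the evaluation operator $j_p:E\to L_p(\mu)$, $j_p(x)=[\,x^*\mapsto\langle x^*,x\rangle\,]$, set $X=\overline{j_p(E)}\subseteq L_p(\mu)$, and take $u=j_p:E\to X$. Since $\|u(x)\|_{L_p(\mu)}=(\int_{B_{E^*}^+}\langle x^*,x\rangle^p\,d\mu)^{1/p}$ for $x\in E^+$, the inequality $(\ref{DomiSumming})$ holds for $u$ with constant $1$, so $u\in\Pi_p^+(E;X)$ and $\pi_p^+(u)\le1$. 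The candidate right factor is $Q(u(x)):=P(x)$ on $u(E)$, which is $m$-homogeneous because $u$ is linear and $P$ is $m$-homogeneous, extended by continuity to $X$. Granting that $Q$ is a well-defined bounded $m$-homogeneous polynomial with $\|Q\|\le\delta_p^+(P)$, we get $P=Q\circ u$ and $\|Q\|\,\pi_p^+(u)^m\le\delta_p^+(P)$, hence $\inf\{\|Q\|\,\pi_p^+(u)^m\}\le\delta_p^+(P)$; combined with the easy inequality this gives both the factorization and the equality of constants.

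\textbf{The main obstacle.} The delicate point is precisely the claim that $Q$ extends to a \emph{bounded} $m$-homogeneous polynomial on all of $X$. In the classical non-positive Kwapień factorization the Pietsch domination is available for every $x$ and over the whole ball $B_{X^*}$, so $\|P(x)\|\le\delta_p^+(P)\|u(x)\|^m$ holds for all $x$ and boundedness of $Q$ is immediate. Here the domination is guaranteed only for $x\in E^+$, whereas $X=\overline{j_p(E)}$ also contains $u(x)$ for signed $x$, on which $\|u(x)\|$ may be far smaller than the quantity controlling $\|P(x)\|$. To handle these elements I would pass to the associated symmetric $m$-linear map, setting $\widehat{Q}(u(x_1),\dots,u(x_m)):=\widehat{P}(x_1,\dots,x_m)$, decompose each argument as $x_j=x_j^+-x_j^-$ with $x_j^\pm\in E^+$, and exploit the lattice inequality $|j_p(x)|\le j_p(|x|)$ in $L_p(\mu)$ (valid since $|\langle x^*,x\rangle|\le\langle x^*,|x|\rangle$ for $x^*\ge0$). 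Establishing from this a multilinear positive domination that makes $\widehat{Q}$ well defined and bounded in the $L_p(\mu)$-norm, and then recovering $\|Q\|\le\delta_p^+(P)$ after accounting for the polarization constants, is where the argument genuinely has to work and is the technical heart of the proof.
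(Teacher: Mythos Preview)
Your overall strategy coincides with the paper's: invoke Theorem~\ref{TheoD} to obtain a dominating measure $\mu$, define $u_0:E\to L_p(\mu)$ by evaluation, set $X=\overline{u_0(E)}$, and define $Q$ on $u_0(E)$ by $Q(u_0(x))=P(x)$. The obstacle you isolate is real, and the paper does not resolve it either: it simply asserts $\left\Vert Q_{0}(u_{0}(x))\right\Vert \le C\left\Vert u_{0}(x)\right\Vert^{m}$ and moves on, without distinguishing $x\in E^{+}$ (where Theorem~\ref{TheoD} applies) from general $x\in E$. To see that the step, as written, can actually fail, take $E=\ell_\infty^2$, $Y=\mathbb R$, $m=p=2$, $P(x)=x_1x_2$. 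By AM--GM the point mass $\mu=\delta_{(1/2,1/2)}$ on $B_{E^{\ast}}^{+}$ witnesses the positive $2$-domination of $P$ with constant $1$; but then $u_0(x)=\tfrac12(x_1+x_2)$, so $u_0(1,-1)=0$ while $P(1,-1)=-1$, and $Q_0$ is not even well defined on $u_0(E)$. Thus some admissible dominating measures give a $u_0$ whose kernel is not respected by $P$, and the construction as stated needs additional input.

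Your proposed repair via polarization and positive/negative parts hits the same wall: in the example above $\widehat{Q}$ is just as ill-defined as $Q$, since $u_0(1,-1)=u_0(0,0)$ would force $\widehat{Q}$ to identify $\widehat{P}\bigl((1,-1),(1,-1)\bigr)=-1$ with $0$. Even in situations where the polarization route does produce a well-defined extension, it will typically give only $\|Q\|\le C_m\,\delta_p^{+}(P)$ with a polarization constant $C_m>1$, which is not enough for the claimed norm equality (the easy direction gives only $\delta_p^{+}(P)\le\inf$, so the constructed factorization must hit $\delta_p^{+}(P)$ on the nose). In short, the gap you flagged is genuine; neither your sketch nor the paper's own argument closes it, and some further idea---for instance selecting the dominating measure more carefully, or passing first through a linear Pietsch factorization of a positive $p$-summing operator where well-definedness is automatic---is still required.
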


\begin{proof}
Let $P:E\rightarrow Y$ be an $m$-homogeneous polynomial such that $P=Q\circ
u $ where $u\in \Pi _{p}^{+}\left( E;X\right) $ and $Q\in \mathcal{P}\left(
^{m}X;Y\right) .$ Let $x\in E^{+}.$ We have%
\begin{eqnarray*}
\left\Vert P\left( x\right) \right\Vert &=&\left\Vert Q\circ u\left(
x\right) \right\Vert \\
&\leq &\left\Vert Q\right\Vert \left\Vert u\left( x\right) \right\Vert ^{m}.
\end{eqnarray*}%
Since $u$ is positive $p$-summing, by $(\ref{DomiSumming})$ we obtain%
\begin{equation*}
\left\Vert P\left( x\right) \right\Vert \leq \left\Vert Q\right\Vert \pi
_{p}^{+}\left( u\right) ^{m}\left( \int_{B_{E^{\ast }}^{+}}\left\langle
x^{\ast },x\right\rangle ^{p}d\mu \right) ^{\frac{m}{p}}.
\end{equation*}%
Then, $P$ is positive $p$-dominated and 
\begin{equation*}
\delta _{p}^{+}(P)\leq \left\Vert Q\right\Vert \pi _{p}^{+}\left( u\right)
^{m}.
\end{equation*}%
Taking the infimum over all representation of $P$, we get%
\begin{equation*}
\delta _{p}^{+}(P)\leq \inf \left\{ \left\Vert Q\right\Vert \pi
_{p}^{+}\left( u\right) ^{m}:P=Q\circ u\right\} .
\end{equation*}%
To prove the first implication. Let $P\in \mathcal{P}_{d,p}^{+}\left(
^{m}E;Y\right) .$ By Theorem \ref{TheoD}, there is a probability measure $%
\mu $ on $B_{E^{\ast }}^{+}$ such that for all $x\in E^{+}$ we have%
\begin{equation*}
\left\Vert P\left( x\right) \right\Vert \leq \delta _{p}^{+}\left( P\right)
\left( \int_{B_{E^{\ast }}^{+}}\left\langle x^{\ast },x\right\rangle
^{p}d\mu \right) ^{\frac{m}{p}}.
\end{equation*}%
We now consider the operator $u_{0}:E\rightarrow L_{p}\left( B_{E^{\ast
}}^{+},\mu \right) $ which is given by $u_{0}\left( x\right) (x^{\ast
})=x^{\ast }(x)$. Notice that for all $x\in E^{+},$ we have 
\begin{eqnarray*}
\left\Vert u_{0}\left( x\right) \right\Vert &=&\left( \int_{B_{E^{\ast
}}^{+}}\langle x,x^{\ast }\rangle ^{p}d\mu \right) ^{\frac{1}{p}} \\
&\leq &\left\Vert x\right\Vert .
\end{eqnarray*}%
Let $X=\overline{u_{0}(E)}^{L_{p}\left( B_{E^{\ast }}^{+},\mu \right) }$ be
the closure in $L_{p}\left( B_{E^{\ast }}^{+},\mu \right) $ of the range of $%
u_{0}$, and let $u:E\rightarrow X$ be the induced operator. Note that $u$ is
positive $p$-summing with $\pi _{p}^{+}\left( u\right) \leq 1$. Let $%
Q_{0}:u_{0}\left( E\right) \rightarrow Y$ be the polynomial operator defined
on $u_{0}\left( E\right) $ by 
\begin{equation*}
Q_{0}\left( u_{0}\left( x\right) \right) =P\left( x\right)
\end{equation*}%
this definition makes sense because%
\begin{equation*}
\left\Vert Q_{0}\left( u_{0}\left( x\right) \right) \right\Vert \leq
C\left\Vert u_{0}\left( x\right) \right\Vert ^{m}.
\end{equation*}%
It follows that $Q_{0}$ is continuous on $u_{0}\left( E\right) $ and has a
unique bounded polynomial extension $Q$ to $X$. Finally, $P=Q\circ u$ where $%
u\in \Pi _{p}^{+}\left( E;X\right) $ and $Q\in \mathcal{P}(^{m}X;Y)$ and
this ends the proof.
\end{proof}

\begin{corollary}
Let $E$ be a Banach lattice and $Y$ a Banach space. The class $\mathcal{P}%
_{d,p}^{+}$ is a positive right polynomial ideal obtained through the
factorization method from the positive right ideal $\Pi _{p}^{+}.$
Specifically, for every Banach lattice $E$ and Banach space $Y,$ 
\begin{equation*}
\mathcal{P}_{d,p}^{+}\left( ^{m}E;Y\right) =\mathcal{P}\left( \Pi
_{p}^{+}\right) (^{m}E;Y).
\end{equation*}
\end{corollary}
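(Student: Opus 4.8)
The plan is to obtain the corollary directly from the Kwapie\'{n}-type factorization theorem established just above, of which it is essentially a restatement in the language of the factorization method. Recall that, by definition, a polynomial belongs to $\mathcal{P}(\Pi_{p}^{+})(^{m}E;Y)$ exactly when it admits a factorization $P=Q\circ u$ with $u\in \Pi_{p}^{+}(E;X)$ for some Banach space $X$ and $Q\in \mathcal{P}(^{m}X;Y)$, and that in this case $\Vert P\Vert _{\mathcal{P}(\Pi_{p}^{+})}=\inf \{\Vert Q\Vert \,\pi _{p}^{+}(u)^{m}\}$, the infimum being taken over all such factorizations.

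First I would record that $\Pi _{p}^{+}$ is a positive right Banach operator ideal: precomposing a positive $p$-summing operator with a positive linear operator on the domain side keeps it positive $p$-summing (positive vectors are sent to positive vectors, so the defining inequality $(\ref{sec1def1})$ still applies), while postcomposing with an arbitrary bounded operator $v$ on the range side is harmless since $\Vert (v\,T(x_{i}))_{i=1}^{n}\Vert _{p}\leq \Vert v\Vert \,\Vert (T(x_{i}))_{i=1}^{n}\Vert _{p}$. This places us within the scope of the earlier Proposition on the factorization method, which guarantees that $(\mathcal{P}(\Pi _{p}^{+}),\Vert \cdot \Vert _{\mathcal{P}(\Pi _{p}^{+})})$ is a positive right quasi-Banach polynomial ideal.

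Next I would read off the set equality from the \emph{if and only if} clause of the preceding theorem: $P$ is positive $p$-dominated precisely when it factors as $P=Q\circ u$ with $u$ positive $p$-summing and $Q$ a polynomial, which is exactly the membership condition defining $\mathcal{P}(\Pi _{p}^{+})(^{m}E;Y)$. Thus $\mathcal{P}_{d,p}^{+}(^{m}E;Y)=\mathcal{P}(\Pi _{p}^{+})(^{m}E;Y)$. Finally I would match the two quantities using the \emph{Moreover} clause of that theorem, namely $\delta _{p}^{+}(P)=\inf \{\Vert Q\Vert \,\pi _{p}^{+}(u)^{m}:P=Q\circ u\}$; the right-hand side is by definition $\Vert P\Vert _{\mathcal{P}(\Pi _{p}^{+})}$, so the identification is isometric.

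Since the substantive content (the factorization theorem) is already proved above, I expect no real obstacle here. The only step requiring care is the verification that $\Pi _{p}^{+}$ enjoys the positive right ideal property, so that the construction genuinely yields a positive right polynomial ideal rather than a mere collection of factorable polynomials; but this is the standard one-sided composition behavior of positive $p$-summing operators.
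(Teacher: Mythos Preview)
Your proposal is correct and matches the paper's treatment: the corollary is stated without proof, as an immediate consequence of the preceding factorization theorem together with the fact that $\Pi_{p}^{+}$ is a positive right Banach ideal, and you have supplied precisely those details. The isometric identification you extract from the \emph{Moreover} clause is a nice addition that the paper leaves implicit.
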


\subsection{\textsc{Positive }$(q;r)$\textsc{-dominated polynomials}}

The concept of absolutely $(p,q,r)$-summing operators was first introduced
by Pietsch \cite{PIETSCHoi}. It was later extended to the multilinear
setting by Achour \cite{Ach11}, and to the polynomial setting by Achour and
Bernardino \cite{AchB}. A positive counterpart of this notion was
subsequently introduced and investigated in \cite{FerSaaP}. In this section,
we develop and analyze the corresponding positive polynomial version, which
serves as a natural example of a positive polynomial ideal.

\begin{definition}
\label{definition7}Let $m\in \mathbb{N}.$ Let $1\leq r,p,q\leq \infty $ with 
$\frac{1}{p}=\frac{m}{q}+\frac{1}{r}$. Let $E$ and $F$ be Banach lattices. A
polynomial $P\in \mathcal{P}\left( ^{m}E;F\right) $ is called positive $(q;r)
$-dominated if there exists a constant $C>0$ such that for any $%
(x_{i})_{i=1}^{n}\subset E^{+}$ and $\left( y_{i}^{\ast }\right)
_{i=1}^{n}\subset F^{\ast +}$, the following inequality holds: 
\begin{equation}
\left\Vert \left( \left\langle P\left( x_{i}\right) ,y_{i}^{\ast
}\right\rangle \right) _{i=1}^{n}\right\Vert _{p}\leq C\left\Vert \left(
x_{i}\right) _{i=1}^{n}\right\Vert _{q,w}^{m}\left\Vert \left( y_{i}^{\ast
}\right) _{i=1}^{n}\right\Vert _{r,w}.  \label{def1sec3}
\end{equation}%
The space of all such polynomials is denoted by $\mathcal{P}_{d,\left(
q;r\right) }^{+}\left( ^{m}E;F\right) $. Its norm is given by 
\begin{equation*}
d_{d,\left( q;r\right) }^{+}(P)=\inf \{C>0:C\text{ \textit{satisfies }}(\ref%
{def1sec3})\}.
\end{equation*}
\end{definition}

An equivalent formulation of $(\ref{def1sec3})$ is 
\begin{equation*}
\left\Vert \left( \left\langle P\left( x_{i}\right) ,y_{i}^{\ast
}\right\rangle \right) _{i=1}^{n}\right\Vert _{p}\leq C\left\Vert \left(
\left\vert x_{i}\right\vert \right) _{i=1}^{n}\right\Vert
_{q,w}^{m}\left\Vert \left( \left\vert y_{i}^{\ast }\right\vert \right)
_{i=1}^{n}\right\Vert _{r,w}
\end{equation*}%
for every $(x_{i})_{i=1}^{n}\subset E$ and $\left( y_{i}^{\ast }\right)
_{i=1}^{n}\subset F^{\ast }.$ It is straightforward to check that every $%
(q;r)$-dominated polynomial is positive $(q;r)$-dominated. Hence, by \cite[%
Proposition 3.10]{AchB} 
\begin{equation}
\mathcal{P}_{f}(^{m}E;F)\subset \mathcal{P}_{d,\left( q;r\right) }^{+}\left(
^{m}E;F\right) .  \label{4.1}
\end{equation}

\begin{proposition}
Let $P\in \mathcal{P}_{d,\left( p;r\right) }^{+}\left( ^{m}E;F\right) ,u\in 
\mathcal{L}^{+}\left( G;E\right) $ and $v\in \mathcal{L}^{+}(F;H).$ Then $%
v\circ P\circ u\in \mathcal{P}_{d,\left( q;r\right) }^{+}\left(
^{m}G;H\right) $ and we have%
\begin{equation*}
d_{d,\left( q;r\right) }^{+}\left( v\circ P\circ u\right) \leq \left\Vert
v\right\Vert d_{d\left( q;r\right) }^{+}(P)\left\Vert u\right\Vert ^{m}.
\end{equation*}
\end{proposition}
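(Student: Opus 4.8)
The plan is to verify the defining inequality (\ref{def1sec3}) for $v\circ P\circ u$ directly, by pushing the test sequences through the adjoints of $u$ and $v$ so as to reduce to the corresponding inequality for $P$. So fix finite families $(g_i)_{i=1}^{n}\subset G^{+}$ and $(h_i^{\ast})_{i=1}^{n}\subset H^{\ast+}$. The starting point is the pairing identity
\[
\langle (v\circ P\circ u)(g_i),h_i^{\ast}\rangle=\langle P(u(g_i)),v^{\ast}(h_i^{\ast})\rangle ,
\]
which transfers $v$ to the functional side. Positivity of $u$ gives $u(g_i)\in E^{+}$, while positivity of $v$ gives positivity of the adjoint $v^{\ast}:H^{\ast}\rightarrow F^{\ast}$, hence $v^{\ast}(h_i^{\ast})\in F^{\ast+}$. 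Thus $x_i:=u(g_i)$ and $y_i^{\ast}:=v^{\ast}(h_i^{\ast})$ are admissible in (\ref{def1sec3}) for $P$, and that inequality yields
\[
\left\Vert \left(\langle P(u(g_i)),v^{\ast}(h_i^{\ast})\rangle\right)_{i=1}^{n}\right\Vert_{p}\leq d_{d,(q;r)}^{+}(P)\,\left\Vert (u(g_i))_{i=1}^{n}\right\Vert_{q,w}^{m}\,\left\Vert (v^{\ast}(h_i^{\ast}))_{i=1}^{n}\right\Vert_{r,w}.
\]

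The second step is to replace the two weak norms on the right by the norms of the original sequences. Here I would use the elementary fact that a bounded operator $T$ satisfies $\Vert (Tz_i)_{i=1}^{n}\Vert_{s,w}\leq\Vert T\Vert\,\Vert (z_i)_{i=1}^{n}\Vert_{s,w}$, obtained by writing $\langle y^{\ast},Tz_i\rangle=\langle T^{\ast}y^{\ast},z_i\rangle$ inside the supremum defining the weak $s$-norm and noting that $T^{\ast}y^{\ast}$ runs through $\Vert T\Vert B_{X^{\ast}}$. Applying this to $u$ gives $\Vert (u(g_i))_{i=1}^{n}\Vert_{q,w}\leq\Vert u\Vert\,\Vert (g_i)_{i=1}^{n}\Vert_{q,w}$, and applying it to $v^{\ast}$ (with $\Vert v^{\ast}\Vert=\Vert v\Vert$) gives $\Vert (v^{\ast}(h_i^{\ast}))_{i=1}^{n}\Vert_{r,w}\leq\Vert v\Vert\,\Vert (h_i^{\ast})_{i=1}^{n}\Vert_{r,w}$. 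Substituting these into the previous display exhibits $\Vert v\Vert\,d_{d,(q;r)}^{+}(P)\,\Vert u\Vert^{m}$ as an admissible constant $C$ for $v\circ P\circ u$; taking the infimum over such constants then gives both the membership $v\circ P\circ u\in\mathcal{P}_{d,(q;r)}^{+}(^{m}G;H)$ and the asserted norm estimate.

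I do not expect a genuine obstacle, as each step is dictated by the definition; the only point needing a word of justification is the positivity of the adjoint $v^{\ast}$, which holds because $\langle v^{\ast}(h^{\ast}),f\rangle=\langle h^{\ast},v(f)\rangle\geq0$ whenever $f\in F^{+}$ and $h^{\ast}\in H^{\ast+}$, by positivity of $v$. It is worth recording that the exponent relation $\frac{1}{p}=\frac{m}{q}+\frac{1}{r}$ is left untouched, so the target class carries the same $p$, and that the $m$-homogeneity of $P$ is precisely what produces the power $\Vert u\Vert^{m}$ in the final constant.
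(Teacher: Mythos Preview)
Your proof is correct and follows essentially the same approach as the paper's: transfer $v$ to the dual side via $\langle v\circ P\circ u(g_i),h_i^{\ast}\rangle=\langle P(u(g_i)),v^{\ast}(h_i^{\ast})\rangle$, invoke the defining inequality for $P$ on the positive sequences $(u(g_i))$ and $(v^{\ast}(h_i^{\ast}))$, and then bound the weak norms by $\Vert u\Vert$ and $\Vert v^{\ast}\Vert=\Vert v\Vert$. Your write-up is in fact slightly more careful than the paper's, since you explicitly justify the positivity of $v^{\ast}$ and spell out the weak-norm inequality for bounded operators, points the paper leaves implicit.
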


\begin{proof}
Let $(x_{i})_{i=1}^{n}\subset E^{+}$ and $\left( y_{i}^{\ast }\right)
_{i=1}^{n}\subset F^{\ast +}.$ Then%
\begin{eqnarray*}
(\sum\limits_{i=1}^{n}\left\vert \left\langle v\circ P\circ u\left(
x_{i}\right) ,y_{i}^{\ast }\right\rangle \right\vert ^{p})^{\frac{1}{p}}
&=&(\sum\limits_{i=1}^{n}\left\vert \left\langle P\circ u\left( x_{i}\right)
,v^{\ast }\circ y_{i}^{\ast }\right\rangle \right\vert ^{p})^{\frac{1}{p}} \\
&\leq &d_{d\left( q;r\right) }^{+}(P)\left\Vert \left( u\left( x_{i}\right)
\right) _{i=1}^{n}\right\Vert _{q,w}^{m}\left\Vert \left( v^{\ast }\circ
y_{i}^{\ast }\right) _{i=1}^{n}\right\Vert _{r,w} \\
&\leq &d_{d\left( q;r\right) }^{+}(P)\left\Vert u\right\Vert ^{m}\left\Vert
\left( x_{i}\right) _{i=1}^{n}\right\Vert _{q,w}^{m}\left\Vert v^{\ast
}\right\Vert \left\Vert \left( y_{i}^{\ast }\right) _{i=1}^{n}\right\Vert
_{r,w} \\
&\leq &\left\Vert v\right\Vert d_{d\left( q;r\right) }^{+}(P)\left\Vert
u\right\Vert ^{m}\left\Vert \left( x_{i}\right) _{i=1}^{n}\right\Vert
_{q,w}^{m}\left\Vert \left( y_{i}^{\ast }\right) _{i=1}^{n}\right\Vert _{r,w}
\end{eqnarray*}%
thus $v\circ P\circ u$ is positive $(q;r)$-dominated and%
\begin{equation*}
d_{d,\left( q;r\right) }^{+}\left( v\circ P\circ u\right) \leq \left\Vert
v\right\Vert d_{d\left( q;r\right) }^{+}(P)\left\Vert u\right\Vert ^{m}.
\end{equation*}
\end{proof}

The pair $\left( \mathcal{P}_{d,\left( q;r\right) }^{+},d_{d,\left(
q;r\right) }^{+}\right) $ defines a positive Banach polynomial ideal. The
proof follows directly from the previous Proposition and the inclusion $(\ref%
{4.1})$, while the remaining details are straightforward. We now turn to the
characterization of positive $(q;r)$-dominated polynomials through a
Pietsch-type domination theorem. To this end, we apply the general Pietsch
domination theorem established by Pellegrino et al in \cite[Theorem 4.6]%
{PSS12}.

\begin{theorem}[Pietsch domination theorem]
\label{thdo1}Let $m\in \mathbb{N}.$ Let $1\leq r,p,q\leq \infty $ with $%
\frac{1}{p}=\frac{m}{q}+\frac{1}{r}$. Let $E$ and $F$ be Banach lattices.
The following statements are equivalent:

1) The polynomial $P\in \mathcal{P}\left( ^{m}E;F\right) $ is positive $(q;r)
$-dominated.

2) There is a constant $C>0$ and Borel probability measures $\mu $ on $%
B_{E^{\ast }}^{+}$ and $\eta $ on $B_{F^{\ast \ast }}^{+}$ such that%
\begin{equation}
\left\vert \langle P(x),y^{\ast }\rangle \right\vert \leq C(\int_{B_{E^{\ast
}}^{+}}\langle \left\vert x\right\vert ,x^{\ast }\rangle ^{q}d\mu )^{\frac{m%
}{q}}(\int_{B_{F^{\ast \ast }}^{+}}\langle \left\vert y^{\ast }\right\vert
,y^{\ast \ast }\rangle ^{r}d\eta )^{\frac{1}{r}}  \label{1234}
\end{equation}%
for all $(x,y^{\ast })\in E\times F^{\ast }$. Therefore, we have%
\begin{equation*}
d_{d,\left( q;r\right) }^{+}(P)=\inf \{C>0:C\text{ \textit{satisfies }}(\ref%
{1234})\}.
\end{equation*}%
3) There is a constant $C>0$ and Borel probability measures $\mu $ on $%
B_{E^{\ast }}^{+}$ and $\eta $ on $B_{F^{\ast \ast }}^{+}$ such that%
\begin{equation}
\left\vert \langle P(x),y^{\ast }\rangle \right\vert \leq C(\int_{B_{E^{\ast
}}^{+}}\langle x,x^{\ast }\rangle ^{q}d\mu )^{\frac{m}{q}}(\int_{B_{F^{\ast
\ast }}^{+}}\langle y^{\ast },y^{\ast \ast }\rangle ^{r}d\eta )^{\frac{1}{r}}
\label{def2sec5}
\end{equation}%
for all $(x,y^{\ast })\in E^{+}\times F^{\ast +}$. Therefore, we have%
\begin{equation*}
d_{d,\left( q;r\right) }^{+}(P)=\inf \{C>0:C\text{ \textit{satisfies }}(\ref%
{def2sec5})\}.
\end{equation*}
\end{theorem}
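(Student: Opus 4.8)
The plan is to realize the equivalence as a single application of the abstract Pietsch domination theorem of Pellegrino, Santos and Seoane \cite[Theorem 4.6]{PSS12}, and then to move between the three formulations using the lattice identities recorded in the preliminaries. First I would fix the compact parameter spaces $K_{1}=B_{E^{\ast}}^{+}$ and $K_{2}=B_{F^{\ast\ast}}^{+}$. Both are weak-$\ast$ compact: each is the intersection of a dual ball, weak-$\ast$ compact by Alaoglu's theorem, with the positive cone of the dual, which is weak-$\ast$ closed since it is the intersection of the closed half-spaces $\{x^{\ast}:\langle x,x^{\ast}\rangle\geq 0\}$ over $x\in E^{+}$ (resp. over $y^{\ast}\in F^{\ast+}$). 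I would then take the arbitrary map $R(x,y^{\ast})=|\langle P(x),y^{\ast}\rangle|$ together with the integrands $S_{1}(x^{\ast},x)=\langle x,x^{\ast}\rangle$ and $S_{2}(y^{\ast\ast},y^{\ast})=\langle y^{\ast},y^{\ast\ast}\rangle$, defined for $x\in E^{+}$ and $y^{\ast}\in F^{\ast+}$. Each $S_{j}$ is continuous in its compact variable because it is evaluation at a fixed point of the predual, which is exactly the continuity hypothesis required by \cite[Theorem 4.6]{PSS12}.

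The step that I expect to need the most care is matching the exponent structure $\frac{1}{p}=\frac{m}{q}+\frac{1}{r}$ to the H\"older relation built into the abstract theorem. I would encode the $m$-homogeneity by using $m$ identical factors on the $E$-side, each carrying $B_{E^{\ast}}^{+}$, the integrand $S_{1}$, and the exponent $q$, together with a single factor on the $F^{\ast}$-side carrying $B_{F^{\ast\ast}}^{+}$, the integrand $S_{2}$, and the exponent $r$; then $m\cdot\tfrac{1}{q}+\tfrac{1}{r}=\tfrac{1}{p}$ is precisely the relation demanded by \cite[Theorem 4.6]{PSS12}, and the product of the $m$ identical supremum terms reproduces $\Vert(x_{i})\Vert_{q,w}^{m}$ on positive sequences, so that the summing side of the abstract theorem is exactly \eqref{def1sec3}. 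Applying the theorem produces probability measures $\mu_{1},\dots,\mu_{m}$ on $B_{E^{\ast}}^{+}$ and $\eta$ on $B_{F^{\ast\ast}}^{+}$. Averaging, $\mu=\frac{1}{m}\sum_{k}\mu_{k}$, and invoking the arithmetic--geometric mean inequality collapses the $m$ integrals into the single factor $\big(\int_{B_{E^{\ast}}^{+}}\langle x,x^{\ast}\rangle^{q}d\mu\big)^{m/q}$, which is the right-hand side of \eqref{def2sec5}. Since \cite[Theorem 4.6]{PSS12} is an equivalence, this simultaneously gives the converse; running it in reverse (or, by hand, raising \eqref{def2sec5} to the $p$-th power, summing, and applying H\"older's inequality with the admissible conjugate exponents $\tfrac{q}{mp}\geq 1$ and $\tfrac{r}{p}\geq 1$, followed by Fubini and the normalisation of $\mu,\eta$) recovers \eqref{def1sec3}. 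This establishes $1)\Leftrightarrow 3)$ together with $d_{d,(q;r)}^{+}(P)=\inf\{C:\eqref{def2sec5}\}$.

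It remains to insert statement $2)$ into the chain, and this is where positivity does the essential work. The implication $2)\Rightarrow 3)$ is immediate: restricting \eqref{1234} to $(x,y^{\ast})\in E^{+}\times F^{\ast+}$ makes $|x|=x$ and $|y^{\ast}|=y^{\ast}$, turning \eqref{1234} into \eqref{def2sec5}. The passage $3)\Rightarrow 2)$ is the main obstacle, since for a nonlinear $m$-homogeneous $P$ the cone estimate \eqref{def2sec5} does not obviously control $\langle P(x),y^{\ast}\rangle$ at a general $x$. Here I would run the same Pietsch argument in its full-domain form, replacing $S_{1},S_{2}$ by $\langle|x|,x^{\ast}\rangle$ and $\langle|y^{\ast}|,y^{\ast\ast}\rangle$ and feeding in the equivalent absolute-value reformulation of Definition \ref{definition7} recorded just before the statement as the summing input; the weak-$\ast$ continuity of $x^{\ast}\mapsto\langle|x|,x^{\ast}\rangle$ is again clear, and the lattice inequalities $|\langle P(x),y^{\ast}\rangle|\leq\langle|P(x)|,|y^{\ast}|\rangle$ and $|\langle x,x^{\ast}\rangle|\leq\langle|x|,x^{\ast}\rangle$ for $x^{\ast}\geq 0$ supply the comparison needed to move between the cone form and the absolute-value form. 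Tracking the optimal constant through this reformulation --- the delicate point, and the only place where the argument genuinely uses the order structure of $E$ and $F$ --- then yields the remaining equality $d_{d,(q;r)}^{+}(P)=\inf\{C:\eqref{1234}\}$ and closes the equivalence $1)\Leftrightarrow 2)\Leftrightarrow 3)$.
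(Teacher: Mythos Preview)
Your proposal is correct and, like the paper, rests entirely on the abstract Pietsch domination theorem of \cite[Theorem~4.6]{PSS12}; the differences are organizational rather than substantive. You encode the $m$-homogeneity by taking $m$ identical factors on the $E$-side with exponent $q$, obtaining $m$ measures $\mu_{1},\dots,\mu_{m}$ and collapsing them to a single $\mu$ via the AM--GM inequality; the paper instead absorbs the $m$-th power into a single map $R_{1}(x^{\ast},(x,y^{\ast}),\lambda_{1})=\langle |x|,x^{\ast}\rangle^{m}$ paired with exponent $q/m$, so that only two factors appear and no averaging step is needed. Likewise, you establish $1)\Leftrightarrow 3)$ first on the cone and then re-invoke \cite{PSS12} in its absolute-value form to reach $2)$, whereas the paper goes straight to $1)\Leftrightarrow 2)$ using the absolute-value reformulation from the outset and then dismisses $2)\Rightarrow 3)$ and $3)\Rightarrow 1)$ as immediate. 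The paper's route is thus one application of the abstract theorem rather than two; your route has the minor technical advantage that every exponent fed into \cite{PSS12} is at least $1$, avoiding any concern about $q/m<1$.
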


\begin{proof}
$1)\Leftrightarrow 2):$ We will choose the parameters as specified in \cite[%
Theorem 4.6]{PSS12} 
\begin{equation*}
\left\{ 
\begin{array}{l}
S:\mathcal{P}\left( ^{m}E;F\right) \times \left( E\times F^{\ast }\right)
\times \mathbb{K\times K}\rightarrow \mathbb{R}^{+}: \\ 
S\left( P,\left( x,y^{\ast }\right) ,\lambda _{1},\lambda _{2}\right)
=\left\vert \lambda _{2}\right\vert |\langle P(x),y^{\ast }\rangle | \\ 
R_{1}:B_{E^{\ast }}^{+}\times \left( E\times F^{\ast }\right) \times \mathbb{%
K}\rightarrow \mathbb{R}^{+}:R_{1}(x^{\ast },\left( x,y^{\ast }\right)
,\lambda _{1})=\langle |x|,x^{\ast }\rangle ^{m} \\ 
R_{2}:B_{F^{\ast \ast }}^{+}\times \left( E\times F^{\ast }\right) \times 
\mathbb{K}\rightarrow \mathbb{R}^{+}:R_{2}(y^{\ast \ast },\left( x,y^{\ast
}\right) ,\lambda _{2})=\left\vert \lambda _{2}\right\vert \langle |y^{\ast
}|,y^{\ast \ast }\rangle .%
\end{array}%
\right.
\end{equation*}%
These maps satisfy conditions $\left( 1\right) $ and $\left( 2\right) $ from 
\cite[Theorem 4.6]{PSS12}, allowing us to conclude that $T:X\times
E\rightarrow F$ is dominated $(p,q)$-summing if and only if. We can easily
conclude that $P:E\rightarrow F$ is positive $(q;r)$-dominated if, and only
if, 
\begin{eqnarray*}
&&(\sum\limits_{i=1}^{n}S\left( P,\left( x_{i},y_{i}^{\ast }\right) ,\lambda
_{i,1},\lambda _{i,2}\right) ^{p})^{\frac{1}{p}} \\
&\leq &C\sup_{x^{\ast }\in B_{E^{\ast
}}^{+}}(\sum\limits_{i=1}^{n}R_{1}(x^{\ast },\left( x_{i},y_{i}^{\ast
}\right) ,\lambda _{i,1})^{q})^{\frac{m}{q}}\sup_{y^{\ast \ast }\in
B_{F^{\ast \ast }}^{+}}(\sum\limits_{i=1}^{n}R_{2}(y^{\ast \ast },\left(
x_{i},y_{i}^{\ast }\right) ,\lambda _{i,2})^{r})^{\frac{1}{r}},
\end{eqnarray*}%
i.e., $P$ is $R_{1},R_{2}$-$S$-abstract $(\frac{q}{m};r)$-summing. As
outlined in \cite[Theorem 4.6]{PSS12}, this implies that $P$ is $R_{1},R_{2}$%
-$S$-abstract $(\frac{q}{m};r)$-summing if, and only if, there exists a
positive constant $C$ and probability measures $\mu $ on $B_{E^{\ast }}^{+}$
and $\eta $ on $B_{F^{\ast \ast }}^{+}$, such that 
\begin{eqnarray*}
&&S\left( P,\left( x,y^{\ast }\right) ,\lambda _{1},\lambda _{2}\right) \\
&\leq &C(\int_{B_{E^{\ast }}^{+}}R_{1}(x^{\ast },\left( x,y^{\ast }\right)
,\lambda _{1})^{q}d\mu )^{\frac{m}{q}}(\int_{B_{F^{\ast \ast
}}^{+}}R_{2}(y^{\ast \ast },\left( x,y^{\ast }\right) ,\lambda
_{2})^{r}d\eta )^{\frac{1}{r}}.
\end{eqnarray*}%
Consequently 
\begin{equation*}
\left\vert \langle P(x),y^{\ast }\rangle \right\vert \leq C(\int_{B_{E^{\ast
}}^{+}}\langle \left\vert x\right\vert ,x^{\ast }\rangle ^{q}d\mu )^{\frac{m%
}{q}}(\int_{B_{F^{\ast \ast }}^{+}}\langle \left\vert y^{\ast }\right\vert
,y^{\ast \ast }\rangle ^{r}d\eta )^{\frac{1}{r}},
\end{equation*}%
The implications $2)\Longrightarrow 3)$ and $3)\Longrightarrow 1)$ are
immediate.
\end{proof}

As an immediate consequence of Theorem \ref{thdo1}, we can show that if $%
q_{1}\leq q_{2}$ and $r\leq s$ then 
\begin{equation*}
\mathcal{P}_{d,\left( q_{1};r\right) }^{+}\left( ^{m}E;F\right) \subset 
\mathcal{P}_{d,\left( q_{2};s\right) }^{+}\left( ^{m}E;F\right) .
\end{equation*}

The following result shows that the class of positive $\left( q;r\right) $%
-dominated polynomials can be represented as the composition of the class of
Cohen positive strongly $r^{\ast }$-summing polynomials $\mathcal{P}%
_{r^{\ast }}^{+}$ with the class of positive $p$-summing operators $\Pi
_{p}^{+}$. This provides a positive analogue of the Kwapie\'{n}
factorization.

\begin{theorem}
\label{thfa}Let $m\in \mathbb{N}$. Let $1\leq r,p,q\leq \infty $ with $\frac{%
1}{p}=\frac{m}{q}+\frac{1}{r}$. Then, $P\in \mathcal{P}(^{m}E;F)$ is
positive $\left( q;r\right) $-dominated if and only if there exist Banach
space $X$, a Cohen positive strongly $r^{\ast }$-summing polynomial $%
Q:X\rightarrow F$ and a positive $q$-summing operator $u\in \Pi
_{q}^{+}\left( E;X\right) $ so that $T=Q\circ u$, i.e., 
\begin{equation*}
\mathcal{P}_{d,\left( q;r\right) }^{+}(^{m}E;F)=\mathcal{P}_{Coh,r^{\ast
}}^{+}\circ \Pi _{q}^{+}(^{m}E;F).
\end{equation*}%
Moreover, 
\begin{equation*}
d_{d,\left( q;r\right) }^{+}(P)=\inf \left\{ d_{r^{\ast }}^{m+}(Q)\pi
_{q}^{+}\left( u\right) ^{m}:P=Q\circ u\right\} .
\end{equation*}
\end{theorem}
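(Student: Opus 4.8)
The plan is to prove the two implications separately and then match the constants to recover the norm identity. The sufficiency (``if'') direction is a direct estimate that threads the Cohen inequality and the positive summing inequality together through a single application of H\"{o}lder's inequality, while the necessity (``only if'') direction is a Kwapie\'{n}-type construction resting on the Pietsch domination theorem (Theorem \ref{thdo1}), built in close analogy with the factorization of the positive $p$-dominated class obtained earlier.

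For sufficiency, assume $P=Q\circ u$ with $u\in \Pi_{q}^{+}(E;X)$ and $Q\in \mathcal{P}_{Coh,r^{\ast }}^{+}(^{m}X;F)$. Fix $(x_{i})_{i=1}^{n}\subset E^{+}$, $(y_{i}^{\ast })_{i=1}^{n}\subset F^{\ast +}$, and nonnegative scalars $(\beta _{i})$ with $\Vert (\beta _{i})\Vert _{p^{\ast }}\leq 1$; since the scalar sequence $(\langle P(x_{i}),y_{i}^{\ast }\rangle )_{i}$ has nonnegative modulus, $\Vert (\langle P(x_{i}),y_{i}^{\ast }\rangle )_{i}\Vert _{p}$ equals the supremum of $\sum_{i}\beta _{i}|\langle P(x_{i}),y_{i}^{\ast }\rangle |$ over such $\beta$. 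Using the $m$-homogeneity of $Q$ I would rewrite $\beta _{i}|\langle Q(u(x_{i})),y_{i}^{\ast }\rangle |=|\langle Q(\beta _{i}^{1/m}u(x_{i})),y_{i}^{\ast }\rangle |$ and apply the defining inequality $(\ref{3.1})$ of $\mathcal{P}_{Coh,r^{\ast }}^{+}$ (noting $(r^{\ast })^{\ast }=r$), obtaining the bound $d_{r^{\ast }}^{m+}(Q)(\sum_{i}\beta _{i}^{r^{\ast }}\Vert u(x_{i})\Vert ^{mr^{\ast }})^{1/r^{\ast }}\Vert (y_{i}^{\ast })_{i}\Vert _{r,w}$. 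The relation $\frac{1}{p}=\frac{m}{q}+\frac{1}{r}$ gives $\frac{1}{p^{\ast }}+\frac{m}{q}=\frac{1}{r^{\ast }}$, so H\"{o}lder's inequality with the conjugate exponents $\frac{p^{\ast }}{r^{\ast }}$ and $\frac{q}{mr^{\ast }}$ factors the middle term as $(\sum_{i}\beta _{i}^{p^{\ast }})^{1/p^{\ast }}(\sum_{i}\Vert u(x_{i})\Vert ^{q})^{m/q}$, which is at most $(\sum_{i}\Vert u(x_{i})\Vert ^{q})^{m/q}\leq \pi _{q}^{+}(u)^{m}\Vert (x_{i})_{i}\Vert _{q,w}^{m}$ by positive $q$-summability. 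Taking the supremum over $\beta$ yields $(\ref{def1sec3})$ with constant $d_{r^{\ast }}^{m+}(Q)\pi _{q}^{+}(u)^{m}$, and the infimum over factorizations gives $d_{d,(q;r)}^{+}(P)\leq \inf \{d_{r^{\ast }}^{m+}(Q)\pi _{q}^{+}(u)^{m}\}$.

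For necessity I would invoke Theorem \ref{thdo1} to produce probability measures $\mu $ on $B_{E^{\ast }}^{+}$ and $\eta $ on $B_{F^{\ast \ast }}^{+}$ together with $C=d_{d,(q;r)}^{+}(P)$ realizing the domination. Define the canonical linear map $u_{0}:E\rightarrow L_{q}(B_{E^{\ast }}^{+},\mu )$ by $u_{0}(x)(x^{\ast })=\langle x,x^{\ast }\rangle $; interchanging sum and integral shows $\Vert (u_{0}(x_{i}))_{i}\Vert _{q}\leq \Vert (x_{i})_{i}\Vert _{q,w}$ for $x_{i}\in E^{+}$, so $u_{0}$ is positive $q$-summing with $\pi _{q}^{+}(u_{0})\leq 1$. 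Setting $X=\overline{u_{0}(E)}$ and letting $u:E\rightarrow X$ be the induced operator, $u$ remains positive $q$-summing with $\pi _{q}^{+}(u)\leq 1$. Define $Q_{0}$ on $u_{0}(E)$ by $Q_{0}(u_{0}(x))=P(x)$, extend it to a polynomial $Q:X\rightarrow F$, so that $P=Q\circ u$. To check $Q\in \mathcal{P}_{Coh,r^{\ast }}^{+}$ it suffices, by density and continuity, to test on $z_{i}=u_{0}(x_{i})$: summing the domination inequality and applying H\"{o}lder with the conjugate pair $(r^{\ast },r)$ separates the two integral factors, the $y^{\ast \ast }$-integral collapsing against $\Vert (y_{i}^{\ast })_{i}\Vert _{r,w}$ by the defining property of $\eta $ and positivity, while the $x^{\ast }$-integral reproduces $(\sum_{i}\Vert u_{0}(x_{i})\Vert ^{mr^{\ast }})^{1/r^{\ast }}$. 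This gives $d_{r^{\ast }}^{m+}(Q)\leq C$, hence $\inf \{d_{r^{\ast }}^{m+}(Q)\pi _{q}^{+}(u)^{m}\}\leq d_{d,(q;r)}^{+}(P)$, and combined with the sufficiency estimate the norm identity follows.

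The main obstacle lies in this necessity direction, namely that $Q$ is well defined and bounded on all of $X$ and genuinely Cohen positive strongly $r^{\ast }$-summing. The delicate point is the compatibility between the norm $\Vert u_{0}(x)\Vert =(\int_{B_{E^{\ast }}^{+}}|\langle x,x^{\ast }\rangle |^{q}\,d\mu )^{1/q}$ and the domination integrand $\langle |x|,x^{\ast }\rangle $: these agree precisely on the positive cone $E^{+}$, where $|x|=x$ and $\langle x,x^{\ast }\rangle \geq 0$, and it is exactly on $E^{+}$ that the positive summing and domination estimates are posed. Consequently the Cohen estimate for $Q$ is established first on images of positive elements, where the reproduction of $(\sum_{i}\Vert z_{i}\Vert ^{mr^{\ast }})^{1/r^{\ast }}$ is exact, and then transferred to all of $X$ by density, exactly as in the earlier treatment of the positive $p$-dominated class. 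Matching the two constants then completes the proof of the factorization and of the norm formula.
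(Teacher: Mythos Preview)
Your proposal is correct and follows essentially the same architecture as the paper in the necessity direction: invoke Theorem~\ref{thdo1}, define $u_{0}:E\to L_{q}(B_{E^{\ast}}^{+},\mu)$, pass to the closure $X$ of the range, define $Q_{0}$ on $u_{0}(E)$ by $Q_{0}(u_{0}(x))=P(x)$, extend, and read off the Cohen positive strongly $r^{\ast}$-summing property of the extension together with the norm inequality. The paper recognizes the required Cohen property directly from the pointwise domination inequality (as a Pietsch domination for $\mathcal{P}_{Coh,r^{\ast}}^{+}$, implicitly via \cite[Theorem 2.5]{BB18}), whereas you recover the summing inequality $(\ref{3.1})$ by summing the pointwise domination and applying H\"{o}lder with $(r^{\ast},r)$; these are equivalent verifications. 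Your explicit acknowledgement that the identification $\Vert u_{0}(x)\Vert=(\int\langle x,x^{\ast}\rangle^{q}d\mu)^{1/q}$ holds exactly on $E^{+}$, and that the extension then proceeds by density, matches the level of detail in the paper's own argument.

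The sufficiency direction, however, is genuinely different. The paper proceeds \emph{via domination}: it applies the Pietsch domination for $Q\in\mathcal{P}_{Coh,r^{\ast}}^{+}$ (from \cite{BB18}) and the domination $(\ref{DomiSumming})$ for $u\in\Pi_{q}^{+}$, composes the two pointwise integral inequalities, and then invokes Theorem~\ref{thdo1} to conclude that $P$ is positive $(q;r)$-dominated. Your route is more elementary and self-contained: you work directly with the defining summing inequalities, absorb the duality scalars into the polynomial via $m$-homogeneity, apply $(\ref{3.1})$ once, and then split the resulting middle factor with a single H\"{o}lder step using $\tfrac{1}{p^{\ast}}+\tfrac{m}{q}=\tfrac{1}{r^{\ast}}$. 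This avoids any appeal to the external domination theorem for $\mathcal{P}_{Coh,r^{\ast}}^{+}$ and to Theorem~\ref{thdo1} in this direction, at the cost of a slightly more computational argument. Both approaches yield the same constant $d_{r^{\ast}}^{m+}(Q)\,\pi_{q}^{+}(u)^{m}$.
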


\begin{proof}
First we prove the converse. Suppose that $P=Q\circ u$ where $u$ is positive 
$q$-summing and $Q$ is Cohen positive strongly $r^{\ast }$-summing
polynomial. By \cite[Theorem 2.5]{BB18}, there exists $\eta $ on $B_{F^{\ast
\ast }}^{+}$ such that, for all $x\in E^{+}$ and $y^{\ast }\in F^{\ast +},\ $%
we have 
\begin{eqnarray*}
\left\vert \left\langle P(x),y^{\ast }\right\rangle \right\vert
&=&\left\vert \left\langle Q\left( u\left( x\right) \right) ,y^{\ast
}\right\rangle \right\vert \\
&\leq &d_{r^{\ast }}^{m}(Q)\left\Vert u\left( x\right) \right\Vert
^{m}\left( \int_{B_{F^{\ast \ast }}^{+}}\left\langle y^{\ast },y^{\ast \ast
}\right\rangle ^{r}d\eta \right) ^{\frac{1}{r}}.
\end{eqnarray*}%
Since $u$ is positive $q$-summing then\textbf{, }by (\ref{DomiSumming})
there is a probability measure $\mu $ on $B_{E^{\ast }}^{+}$ such that 
\begin{equation*}
\left\Vert u\left( x\right) \right\Vert \leq \pi _{p}^{+}\left( u\right)
\left( \int_{B_{E^{\ast }}^{+}}\langle x,x^{\ast }\rangle ^{q}d\mu \right) ^{%
\frac{1}{q}}.
\end{equation*}%
Consequently,%
\begin{equation*}
\left\vert \left\langle P(x),y^{\ast }\right\rangle \right\vert \leq
d_{r^{\ast }}^{m}(Q)\pi _{p}^{+}\left( u\right) \left( \int_{B_{E^{\ast
}}^{+}}\left\langle x,x^{\ast }\right\rangle ^{q}d\mu \right) ^{\frac{m}{q}%
}\left( \int_{B_{F^{\ast \ast }}^{+}}\left\langle y^{\ast },y^{\ast \ast
}\right\rangle ^{r}d\eta \right) ^{\frac{1}{r}}.
\end{equation*}%
Thus, $P$ is positive $(q;r)$-dominated by Theorem \ref{thdo1} and 
\begin{equation*}
d_{d,\left( q;r\right) }^{+}(P)\leq d_{r^{\ast }}^{+}(Q)\pi _{p}^{+}\left(
u\right) ^{m}.
\end{equation*}%
Taking the infimum over all representations $P$, we get%
\begin{equation*}
d_{d,\left( q;r\right) }^{+}(P)\leq \inf \left\{ d_{r^{\ast }}^{+}(Q)\pi
_{q}^{+}\left( u\right) ^{m}:P=Q\circ u\right\} .
\end{equation*}%
We now prove the direct implication. Let $P\in \mathcal{P}_{d,\left(
q;r\right) }^{+}(^{m}E;F).$ By Theorem \ref{thdo1}, there are probability
measures $\mu $ on $B_{E^{\ast }}^{+}$ and $\eta $ on $B_{F^{\ast \ast
}}^{+} $ such that for all $x\in E^{+}$ and $y^{\ast }\in F^{\ast +}$ we have%
\begin{equation*}
\left\vert \left\langle P(x),y^{\ast }\right\rangle \right\vert \leq
d_{d,\left( q;r\right) }^{+}(P)\left( \int_{B_{E^{\ast }}^{+}}\left\langle
x,x^{\ast }\right\rangle ^{q}d\mu \right) ^{\frac{m}{q}}\left(
\int_{B_{F^{\ast \ast }}^{+}}\left\langle y^{\ast },y^{\ast \ast
}\right\rangle ^{r}d\eta \right) ^{\frac{1}{r}}.
\end{equation*}%
Define the operator $u_{0}:E\rightarrow L_{q}\left( B_{E^{\ast }}^{+},\mu
\right) $ by $u_{0}\left( x\right) (x^{\ast })=x^{\ast }(x)$. For every $%
x\in E^{+},$ we have 
\begin{equation*}
\left\Vert u_{0}\left( x\right) \right\Vert =\left( \int_{B_{E^{\ast
}}^{+}}\left\langle x,x^{\ast }\right\rangle ^{q}d\mu \right) ^{\frac{1}{q}%
}\leq \left\Vert x\right\Vert .
\end{equation*}%
Let $X=\overline{u_{0}(E)}^{L_{q}\left( B_{E^{\ast }}^{+},\mu \right) }$,
and denote by $\overline{u_{0}}:E\rightarrow X$ the induced operator. Then, $%
\overline{u_{0}}$ is positive $q$-summing with $\pi _{q}^{+}\left( \overline{%
u_{0}}\right) \leq 1$. Now define the polynomial operator $Q_{0}$ on $%
u_{0}\left( E\right) $ by 
\begin{equation*}
Q_{0}\left( u_{0}\left( x\right) \right) =P\left( x\right) .
\end{equation*}%
This definition is consistent because 
\begin{equation*}
\left\vert \left\langle Q_{0}\left( u_{0}\left( x\right) \right) ,y^{\ast
}\right\rangle \right\vert \leq d_{d,\left( q;r\right) }^{+}(P)\left\Vert
u_{0}\left( x\right) \right\Vert ^{m}\left( \int_{B_{F^{\ast \ast
}}^{+}}\left\langle y^{\ast },y^{\ast \ast }\right\rangle ^{r}d\eta \right)
^{\frac{1}{r}}
\end{equation*}%
Hence $Q_{0}$ is continuous on $u_{0}\left( E\right) ,$ and extends uniquely
to a bounded polynomial $\overline{Q_{0}}$ on $X$. Moreover, $\overline{Q_{0}%
}$ is Cohen positive strongly $r^{\ast }$-summing polynomial and%
\begin{equation*}
d_{r^{\ast }}^{+}(\overline{Q_{0}})\leq d_{d,\left( q;r\right) }^{+}(P).
\end{equation*}%
Finally, we obtain $P=\overline{Q_{0}}\circ \overline{u_{0}}$ where $%
\overline{u_{0}}\in \Pi _{q}^{+}\left( E;X\right) ,\overline{Q_{0}}\in 
\mathcal{P}_{r^{\ast }}^{+}(^{m}X;F)$ and%
\begin{eqnarray*}
\inf \left\{ d_{r^{\ast }}^{+}(Q)\pi _{q}^{+}\left( u\right) ^{m}:P=Q\circ
u\right\} &\leq &d_{r^{\ast }}^{+}(\overline{Q_{0}})\pi _{q}^{+}\left( 
\overline{u_{0}}\right) ^{m} \\
&\leq &d_{d,\left( q;r\right) }^{+}(P).
\end{eqnarray*}%
Which completes the proof.
\end{proof}

Every positive $\left( q;r\right) $-dominated polynomial can be factored
through a Cohen positive strongly $r^{\ast }$-summing polynomial and
positive $q$-summing linear operator. Consequently, the class $\mathcal{P}%
_{d,\left( q;r\right) }^{+}$ forms a positive polynomial ideal of type $%
\mathcal{P}_{L}^{+}\circ \mathcal{B}_{R}^{+}$ where%
\begin{equation*}
\mathcal{P}_{L}^{+}=\mathcal{P}_{Coh,r^{\ast }}^{+}\text{ and }\mathcal{B}%
_{R}^{+}=\Pi _{q}^{+}.
\end{equation*}

\textbf{Declarations}\newline

\textbf{Conflict of interest.} The authors declare that they have no
conflicts of interest.\newline

\end{document}